 \newtheorem{theorem}{Theorem}[section]
 \newtheorem{cor}[theorem]{Corollary}
 \newtheorem{proposition}[theorem]{Proposition}
 \theoremstyle{definition}
 \newtheorem{definition}[theorem]{Definition}
 \theoremstyle{definition}
 \newtheorem{rem}[theorem]{Remark}
 \numberwithin{equation}{section}
\newcommand{\ben}{\begin{equation}}
\newcommand{\een}{\end{equation}}
\newcommand{\integer}{\ensuremath{{\mathbb Z}}}
\newcommand{\real}{\ensuremath{{\mathbb R}}}
\newcommand{\PP}{{\mathcal P}}
\newcommand{\BB}{{\mathcal B}}
\newcommand{\EE}{{\mathcal E}}
\newcommand{\CC}{\mathcal{C}}
\newcommand{\LL}{\mathcal{L}}
\newcommand{\MM}{\mathcal{M}}
\newcommand{\NN}{\mathcal{N}}
\newcommand{\RR}{\mathcal{R}}
\newcommand{\OO}{\mathcal{O}}
\newcommand{\To}{\longrightarrow}
\newcommand{\gr}{\mathfrak}
\newcommand{\scr}{\mathscr}
\newcommand{\Vect}{\ensuremath{{\mathrm{Vect}}}}
\newcommand{\lf}{\ensuremath{\lfloor}}
\newcommand{\rf}{\ensuremath{\rfloor}}
\begin{document}

\title[T-duality, generalized geometry and dg-manifolds]{ T-duality and exceptional generalized geometry through symmetries of dg-manifolds.}

\author{Ernesto Lupercio, Camilo Rengifo and Bernardo Uribe}
\thanks{Part of the research
on this work was carried out while the third author enojoyed a ``Humboldt Research Fellowship for Experienced Researchers''.
The third author acknowledges and thanks the financial support of the Von Humboldt Foundation. }
\address{Departamento de Matem\'{a}ticas, CINVESTAV, 
Apartado Postal 14-740, C.P. 07000, M\'{e}xico, D.F., M\'{E}XICO. Tel:  +52 5550613868
Fax: +52 5550613876}
\email{lupercio@math.cinvestav.mx}
\address{Departamento de Matem\'{a}ticas, Universidad de los Andes, Carrera 1 N. 18A - 10, Bogot\'a, COLOMBIA. Tel: +571 3394999 ext. 2710 Fax: +571 3324427}
\email{cam-reng@uniandes.edu.co}
\address{Departamento de Matem\'{a}ticas y Estad\'{\i}stica, Universidad del Norte, Km 5 via Puerto Colombia, Barranquilla, Colombia}
\email{buribe@gmail.com}
\subjclass[2010]{53D18 53D17 (primary), 55R65, 57R22 (secondary)}
\keywords{dg-manifolds, T-duality, exact Courant algebroids, generalized geometry, exceptional generalized geometry. }
\begin{abstract}
We study dg-manifolds which are  $\real[2]$-bundles over $\real[1]$-bundles over manifolds, we calculate
its symmetries, its derived symmetries and we introduce the concept of T-dual dg-manifolds.
 Within this framework, we construct the T-duality map
as a degree -1 map between the cohomologies of the T-dual dg-manifolds and we show
an explicit isomorphism between the differential graded algebra of the symmetries of the T-dual 
dg-manifolds. We, furthermore, show how the algebraic structure
underlying $B_n$ generalized geometry could be recovered as derived dg-Leibniz algebra of the fixed points of
the T-dual automorphism acting on the symmetries of a self T-dual dg-manifold, and we show how
other types of algebraic structures underlying exceptional generalized geometry could be obtained 
 as derived symmetries of certain dg-manifolds.
\end{abstract}
\maketitle
\section{Introduction}
Generalized complex manifolds and Dirac structures are geometric structures defined
in \cite{Hitchin, Gualtieri} and \cite{Courant} respectively, which among other things, provide a unified framework on which symplectic manifolds, Poisson manifolds,
complex manifold and certain types of foliations can be treated in a uniform way.

The  algebraic structures underlying these geometries are  the so called \emph{exact Courant algebroids}. These
algebroids are split extensions of the tangent bundle by the cotangent bundle, and are endowed with a Leibniz 
bracket known as the Courant-Dorfman bracket. Dorfman \cite{Dorfman}, in its original construction, obtained
the Courant-Dorfman bracket by applying the derived bracket procedure to a 4-tiered differential graded
Lie algebra associated to vector fields, functions, 1-forms and 2-forms. It was noted by {\v{S}}evera that 
the construction of Dorfman could be recovered and generalized to the twisted case by performing the derived
bracket procedure to the differential graded Lie algebra of symmetries of an $\real[2]$-bundle over $T[1]M$ in the language
of dg-manifolds (cf. \cite{Severa-homotopy}), so it was in this framework of dg-manifolds that the third author \cite{Uribe} explored
the conditions under which a Lie group action on a manifold could be lifted to an action on an exact Courant algebroid  

In recent works on string theory \cite{Hull, PachecoWaldram} it has been proposed to consider a geometric structure called
\emph{exceptional generalized geometry}, an structure similar in nature
to the geometry of generalized complex manifolds in the sense that it incorporates vector fields and differential forms of higher degree in order to study backgrounds on eleven-dimensional supergravity. The underlying algebraic structure of this exceptional generalized geometry was further studied in \cite{Baraglia} where it is described how this structure can be obtained from 
a certain simple Lie algebra and, at the same time, there are also described other related types of algebraic structures coming from other
simple Lie algebras.
Motivated by the relation developed by {\v{S}}evera between Exact Courant algebroids and the derived symmetries of $\real[2]$-bundles
over $T[1]M$, in this work, we investigate how to obtain the algebraic structures, defined in \cite{Baraglia}, as 
derived constructions from the symmetries of particular dg-manifolds. 

One of the exceptional generalized geometries introduced in \cite{Baraglia} under the name of
\emph{$B_n$ generalized geometry} lives on the bundle $TM \oplus \real \oplus T^*M$ and
can be twisted by a pair of a closed 2-form $F$ and a 3-form $H$ satisfying $dH + F \wedge F=0$. Now, since 
the structural constants which define a $\real[2]$-bundle over a $\real[1]$-bundle over $T[1]M$ are  closed
2-forms $F$ and $\bar{F}$ and a 3-form $H$ satisfying $dH + F \wedge \bar{F}=0$, we note that the ``$B_n$ generalized
geometry" should be related to the symmetries of ``self dual bundles", namely the ones on which $F = \bar{F}$; 
this basic observation is the basis of this investigation.

Let us describe what we accomplish in this work. We start by reviewing the basics on dg-manifolds and on $\real[n]$-bundles over
dg-manifolds. Then we study $\real[2]$-bundles over $\real[1]$-bundles over $T[1]M$ and we find that the structural constants
of these bundles are closed 2-forms $F$ and $\bar{F}$ and a 3-form $H$ satisfying $dH + F \wedge \bar{F}=0$ making
the vector field $d + F \partial_q + (H + q\bar{F})\partial_t$ homological. Then, we note that, since the equations on $F$ and $\bar{F}$ are
symmetric, we see that the homological vector field $d + \bar{F} \partial_q + (H + q{F})\partial_t$ defines
another $\real[2]$-bundle over another $\real[1]$-bundle and we remark that this construction is completely analogous to the
original construction of topological T-duality for circle bundles; hence, it is natural to call the two resulting dg-manifolds T-dual. 
We follow the construction of the T-duality isomorphism in twisted cohomologies done in \cite{BouwknegtMathai1}  and we put
it in the framework of dg-manifolds; we obtain a degree -1 map between the complexes that calculate the cohomology
 of the T-dual dg-manifolds, and we show that this map fits on the right hand side of a short exact sequence of complexes where the
 left hand side is given by the inclusion of the De Rham complex of $M$ into the complex of the first dg-manifold. This degree -1 map,
 in cohomology of degrees greater than the cohomology of $M$, produces an isomorphism and in particular it induces the known
 isomorphism in twisted cohomology for the T-dual pair, this is Theorem \ref{theorem T-duality map}.

 Then, we proceed to explicitly calculate the symmetries and their derived symmetries of $\real[2]$-bundles over $\real[1]$-bundles
 and we show that the symmetries and derived symmetries of T-dual pairs are isomorphic through a simple map
 that switches tangent information with cotangent information; this is Theorem \ref{theorem T-duality iso for derived symmetries}.
 We note, in Corollary \ref{corollary T-duality for Courant algebroids}, that this isomorphism provides an alternative proof for the T-duality isomorphism al the level of the Courant algebroids for dual
 pairs carried out in \cite{CavalcantiGualtieri}.
 
Then, we find that the algebraic structure underlying  $B_n$ generalized geometry in the sense of Baraglia \cite{Baraglia} can be obtained from a self T-dual
 dg-manifold by looking at the derived symmetries fixed by the T-duality automorphism; this is Theorem \ref{theorem Bn geometry}. The latter, 
 in particular, implies that the $B_n$ generalized geometry embeds into the generalized geometry of the principal $\real[1]$-bundle defined by
 the curvature form $F$. We finish by giving an alternative description of the underlying algebraic structure of $E_6$ generalized
 geometry as the derived symmetries of a $\real[6]$-bundle over a $\real[3]$-bundle over $T[1]M$.
 
 The results of this paper seem to support the general intuition (observed by many people, including Nigel Hitchin) that generalized geometry seems to be a way to encode a sort of poor man's super-geometry, as dg-manifolds are an enriched form of super-manifolds, and they dovetail nicely with the previous work of Schwarz \cite{schwarz1996superanalogs}. Judging by the relevance of the differential-graded viewpoint to exact Courant algebroids, one would expect the results in this paper to bring useful insights into exceptional generalized geometry.
 
 A motivation for this last assertion comes from the geometric realm of derived differential geometry and higher categories; for dg-manifolds are first order approximations to derived stacks. Indeed, one of the most influential papers about global deformation theory (in the context of topological quantum field theories) is Kontsevich's paper on enumeration of rational curves \cite{KontsevichTorus95}, used there to define and compute the virtual fundamental class of the moduli space of stable curves. In order to do this Kontsevich uses the concept of \emph{quasi-manifold}, that is to say,  space with charts $U_i$, that locally looks like the intersection of two manifolds inside a larger one: $U_i \cong Y_i \cap Z_i \subseteq V_i.$ Unfortunately the descent theory required to make this work in a rigorous manner seems formidable: the natural way to control the gluing is through the theory of higher stacks \cite{simpson1997closed}. A satisfactory foundational framework for this avenue of thought has been developed by Lurie \cite{Lurie10} and by T\"oen \cite{Toen09, toen2014derived}. This approach has already been fruitful in mathematical physics \cite{PT13}.
 
 The approach of this paper is different. Alternatively in \cite{CFK01, CFK02, CFK09} Ciocan-Fontanine and Kapranov have introduced the notion of \emph{dg-manifold}, in order to formalise Kontsevich's notion of virtual fundamental class. The notion of dg-manifold while only an approximation to the notion of derived higher stack is, nonetheless, enough for this task. It provides a rigorous and simple framework to work in mathematical-physics without the large foundational weight of the theory of derived higher stacks. This is the approach that we choose for this work to understand $T$-duality, and we are confident that it will provide useful in mathematical physics and string theory. 
 
 We would like to thank the referee for many useful comments that we believe have improved this work.
  
\section{$\real[n]$-bundles on dg-manifolds} 
\subsection{dg-Manifolds} Let us start with some notational conventions. Let $\NN$ be a differentiable
 (super)manifold and  let us denote its sheaf of smooth functions by
$\OO_\NN$. For $P=\{P_k\}_{k \in \integer}$
 a graded vector bundle over $\NN$, $S(P)$ will denote the the sheaf of graded
 commutative $\OO_\NN$-algebras  freely generated by $P$; the locally ringed space
 $(\NN,S(P^*))$ will also be denoted by $P$ where $P^*$ is the dual vector bundle.
 For an integer $k$, $P[k]$ denotes the shifted vector bundle with $P[k]_l:=P_{k+l}$.
To keep the notation simple, we will usually denote a vector bundle and its $\OO$-module
 of sections with the same symbol.

\begin{definition}
A (non-negatively) {\it graded manifold}  is a locally ringed
space $\PP=(\NN, \OO_\PP)$, which is locally isomorphic to $(U,\OO_U
\otimes S(P^*))$, where $U \subset \real^{m|r}$ is an open domain
of $\NN$ and $P=\{P_i\}_{-n \leq i \leq 0}$ is a finite dimensional
negatively graded (super)vector space. The number $n$ is called
the degree of the graded manifold $P$.
\end{definition}
The global sections of $\PP$ will be called the functions on $\PP$ and
they will be denoted by $C^* (\PP)$, and the derivations of $C^*(\PP)$ will
be the vector fields of $\PP$ and they will be denoted $\Vect^*(\PP)$.
\begin{definition}
A {\it differential graded manifold} (dg-manifold) is a graded
manifold $\PP$ equipped with a degree 1 vector field $Q$ of $\Vect^1(\PP)$
satisfying $[Q,Q]/2 = Q^2=0$ (such a field $Q$ will be called a \emph{homological vector field}).
\end{definition}

Morphisms of dg-manifolds are morphisms of locally ringed spaces
respecting the homological vector field.  We recommend \cite{Voronov-graded, Roytenberg} for
an introduction to the theory of differential graded manifolds.

A dg-manifold over a point of degree $n$ is the same as an
$L^\infty$-algebra of degree $n$, also called Lie $n$-algebra. A
dg-manifold of degree $n$ is what is known as a ``Lie $n$
algebroid".

\subsubsection{Manifolds}
Given a manifold $M$, let us consider the graded manifold $T[1]M$, whose algebra of functions 
is the algebra of differential forms
$$C^*(T[1]M) = \Omega^\bullet M,$$
and endow it with homological vector field 
$$d= dx^i \frac{\partial}{\partial x_i},$$
the degree 1 derivation which is the De Rham differential. Let us denote by 
$$\MM=(T[1]M, d)$$
the image of $M$ in the category of dg-manifolds

\subsection{$\real[n]$-bundles} A $\real[n]$-bundle over the dg-manifold $\NN= ( \OO_\NN, Q_\NN)$ consists of the graded
manifold $\OO_\NN \oplus \real[n]$ together with a choice of derivation of degree 1
$$Q_\NN + \Theta \partial_t$$
of the algebra of functions
$$C^*(\OO_\NN) \otimes S[t]$$
that satisfies the Maurer-Cartan equation, i.e. $Q_\NN \Theta=0$.
Here $\Theta$ is a function of degree $n+1$ of $\NN$ and $S[t]$ denotes the symmetric graded algebra generated by the parameter $t$
whose degree is $n$.

\subsubsection{$\real[1]$-bundles over $T[1]M$} A $\real[1]$-bundle over the dg-manifold $\MM$ is given by the graded manifold $T[1]M \oplus \real[1]$ together
with a choice of homological vector field that lifts $d$.  The algebra of functions of $T[1]M \oplus \real[1]$ becomes 
$$C^*(T[1]M \oplus \real[1])=\Omega^\bullet M \otimes \Lambda [q]$$ where $q$ is a variable of degree 1. A 
generic choice of derivation of degree 1 which lifts $d$ is given by
$$d + F \partial_q$$ 
where $F$ is a 2-form on $M$, and this derivation becomes homological
if it satisfies the Maurer-Cartan equation
$$[d + F \partial_q,d + F \partial_q]=0;$$
which is the same as saying that $dF=0$. So, any choice of closed 2 -form will define a $\real[1]$-bundle over $\MM$.
Let us denote this dg-manifold by
$$\RR:= (T[1]M \oplus \real[1], d + F \partial_q).$$ 

\subsubsection{Relation to principal $S^1$-bundles} Let $S^1 \to R \stackrel{\pi}{\to} M$ be a principal $S^1$ bundle and
denote by $\theta \in \Omega^1R$ any connection 1-form and by $F \in \Omega^2 M$ the associated curvature form; we have that $d_R \theta = \pi^*F$,
where $d_R$ denotes the De Rham differential in $R$. 
Consider also the dg-manifold $\RR:= (T[1]M \oplus \real[1], d + F \partial_q)$ where $F$ is the curvature 2-form of the $S^1$-bundle

We claim that
\begin{proposition} \label{proposition isomorphism with circle bundles}
Consider the principal bundle $S^1 \to R \stackrel{\pi}{\to} M$. Then
the map of graded algebras
\begin{align*}
j : \Omega^\bullet M \otimes \Lambda [q] & \to \Omega^\bullet R\\
\omega_0 + q \omega_1 & \mapsto \pi^*  \omega_0 + \theta   \pi^* \omega_1
\end{align*}
 induces a quasi-isomorphism
 \begin{align*}
j : (\Omega^\bullet M \otimes \Lambda [q], d_M + F \partial_q) & \to (\Omega^\bullet R, d_R)
\end{align*}
 \end{proposition}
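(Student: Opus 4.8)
The plan is to prove the statement in three moves: verify that $j$ is a cochain map, identify its image with the $S^1$-invariant forms on $R$, and then invoke the classical fact that invariant forms compute $H^\bullet_{\mathrm{dR}}(R)$ because $S^1$ is compact and connected.

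\emph{Step 1: $j$ is a cochain map.} Extending $d_M$ to $\Omega^\bullet M \otimes \Lambda[q]$ by $d_M q = 0$ and using $(F\partial_q)(\omega_0 + q\omega_1) = F\wedge\omega_1$, one has $(d_M + F\partial_q)(\omega_0 + q\omega_1) = d_M\omega_0 + F\wedge\omega_1 - q\, d_M\omega_1$. Applying $j$ and using $d_R\circ\pi^* = \pi^*\circ d_M$ together with the structure equation $d_R\theta = \pi^*F$ gives
\[
j\big((d_M + F\partial_q)(\omega_0 + q\omega_1)\big) = \pi^* d_M\omega_0 + \pi^*F\wedge\pi^*\omega_1 - \theta\wedge\pi^* d_M\omega_1,
\]
which is exactly $d_R(\pi^*\omega_0 + \theta\wedge\pi^*\omega_1) = d_R\, j(\omega_0 + q\omega_1)$. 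The only point requiring care is the bookkeeping of Koszul signs coming from the odd variable $q$ and from commuting $d_R$ past $\theta$, but these are arranged to match by the sign conventions fixed for $d + F\partial_q$.

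\emph{Step 2: the image of $j$ is the algebra of invariant forms.} Let $V\in\Gamma(TR)$ be the fundamental vector field of the principal action, normalized by $\theta(V)=1$; since $\theta$ is a connection form, both $V$ and $\theta$ are $S^1$-invariant, so $j$ takes values in $\Omega^\bullet(R)^{S^1}$. Conversely, given an invariant form $\eta$, set $\eta_1 := \iota_V\eta$ and $\eta_0 := \eta - \theta\wedge\eta_1$; a short contraction computation shows $\iota_V\eta_1 = \iota_V\eta_0 = 0$, so $\eta_0$ and $\eta_1$ are invariant and horizontal, hence basic, hence pullbacks $\pi^*\omega_0$ and $\pi^*\omega_1$ from $M$, and $\eta = j(\omega_0 + q\omega_1)$. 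Injectivity follows by applying $\iota_V$ and using that $\pi^*$ is injective. Thus $j$ is an isomorphism of differential graded algebras onto $(\Omega^\bullet(R)^{S^1}, d_R)$.

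\emph{Step 3: conclusion.} The inclusion $(\Omega^\bullet(R)^{S^1}, d_R)\hookrightarrow(\Omega^\bullet R, d_R)$ is a quasi-isomorphism: averaging a form over the group produces the invariant projector, which is the identity on invariant forms and is cochain-homotopic to the identity on all forms (integrate over $S^1$ the standard homotopy between $\rho_g^*$ and the identity, which exists because $S^1$ is connected). Combining this with Step 2, $j$ is a quasi-isomorphism. The argument has no genuine obstacle; its content is the identification in Step 2 together with the averaging fact in Step 3, the only mild nuisance being the sign bookkeeping in Step 1. If one prefers to avoid compact-group averaging, an alternative is available: the source complex is the mapping cone of $\wedge F$ on $(\Omega^\bullet M, d_M)$, so its cohomology sits in a long exact sequence of Gysin type which $j$ carries isomorphically to the Gysin sequence of the circle bundle $R\to M$ ($\pi^*$ on the $\Omega^\bullet M$ summand, $\wedge[F]$ as connecting map on both sides), and the five lemma finishes the proof.
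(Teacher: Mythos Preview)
Your proof is correct, but the route differs from the paper's. After the same Step~1 verification that $j$ is a chain map, the paper does not identify the image of $j$ with $\Omega^\bullet(R)^{S^1}$; instead it puts a good-cover filtration on both complexes, obtains the Serre spectral sequence on the target and an analogous spectral sequence on the source, checks that $j$ induces an isomorphism on the $E_2$-pages $H^p(M)\otimes\Lambda[q]\to H^p(M)\otimes H^q(S^1)$, and concludes by comparison of spectral sequences. Your argument is more concrete and exploits the principal $S^1$-structure directly: the decomposition $\eta=\eta_0+\theta\wedge\eta_1$ along the connection gives an explicit inverse onto invariant forms, and averaging over the compact connected group does the rest. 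The spectral-sequence argument, by contrast, never touches invariant forms and would adapt with no change of strategy to other fibre types; your alternative sketch via the Gysin sequence and the five lemma is essentially the same content as the paper's $E_2$-comparison, repackaged without the spectral-sequence language.
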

 \begin{proof}
 Let us prove first that the map $j$ is a morphism of complexes; on the left hand side we have that:
 \begin{align*}
d_R(j(\omega_0 + q \omega_1 )) &= d_R(\pi^*  \omega_0 + \theta   \pi^* \omega_1) \\
&= d_R(\pi^*  \omega_0) + \pi^*F   \pi^* \omega_1- \theta d_R ( \pi^* \omega_1)\\
&= \pi^* (d_M \omega_0) + \pi^*F   \pi^* \omega_1- \theta  \pi^* (d_M \omega_1),
\end{align*}
 while on the right hand side, we can write:
 \begin{align*}
j( d_M + F \partial_q(\omega_0 + q \omega_1 )) &= j(d_M  \omega_0 + F   \omega_1 - q d_M \omega_1) \\
&= \pi^* (d_M \omega_0) + \pi^*F   \pi^* \omega_1- \theta  \pi^* (d_M \omega_1),
\end{align*}
hence $d_R \circ j = j \circ ( d_M +F \partial_q)$.

Let's consider the filtration associated to a good cover on $M$.  Such filtration induces a Serre spectral sequence of the fibration $S^1 \to R \to M$
 which converges to $H^*(R)$; let us denote this spectral sequence by $\overline{E}^{p,q}_r$. Let's consider a corresponding filtration
associated to $\Omega^\bullet M \otimes \Lambda [q]$ via the same open sets, which converges to 
$H^*(\Omega^\bullet M \otimes \Lambda [q], d_M + F \partial_q)$; let´s denote this spectral sequence by ${E}^{p,q}_r$. 
Since the filtrations are obtained via the same
open sets, we have that $j$ induces a map of spectral sequences
$$j: {E}^{p,q}_r \to \overline{E}^{p,q}_r.$$

The second page of $E$ is equal to 
$$E_2^{p,q} = H^p(M) \otimes \Lambda[q]$$
and since the fundamental group of $M$ acts trivially on $H^*(S^1)$, we have 
that
$$\overline{E}^{p,q}_2 = H^p(M) \otimes H^q(S^1).$$
The map $j$ induces an isomorphism of the second pages of the spectral sequences, and, therefore, it induces an isomorphism
of the infinity pages. Since the complexes are bounded, the map $j$ induces an isomorphism of the cohomologies
$$j : H^* ( \Omega^\bullet M \otimes \Lambda [q], d_M + F \partial_q) \stackrel{\cong}{\to} H^*(R).$$
 \end{proof}

\subsubsection{$\real[n]$-bundles on $T[1]M$} A $\real[n]$-bundle over $T[1]M$ is determined by a closed $n+1$ form $\Theta \in \Omega_{\rm{cl}}^{n+1}M$ making the derivation
$$d + \Theta \partial_t$$
of the algebra of functions $\Omega^\bullet M \otimes S[t]$
homological.

In view of the result of Proposition \ref{proposition isomorphism with circle bundles},  we should think of the dg-manifold
$$( T[1]M \oplus \real[n], d + \Theta \partial_t)$$
as the analogue of a principal $K(\integer, n)$-bundle $K(\integer, n) \to P \to M$ over the manifold $M$. Since we do not have a 
reasonable model for the differential forms for $K(\integer,n)$ for $n \geq 2$, we cannot proceed as in the previous proposition.
Instead, we could think of the dg-manifold $( T[1]M \oplus \real[n], d + \Theta \partial_t)$ as a rigid model for the differential forms on $P$,
provided that the differential form $\Theta$ has integral periods.
 In particular, we should think that the cohomology
 $$H^*(\Omega^\bullet M \otimes S[t],d + \Theta \partial_t)$$
 calculates the cohomology of the total space $P$.

\subsubsection{$\real[2]$-bundles on $T[1]M$ and twisted cohomology} A $\real[2]$-bundle over $T[1]M$ is the dg-manifold
$$\PP = (T[1]M \oplus \real[2], Q_\PP=d + H \partial_t)$$
for $H$ a closed 3-form on $M$.  The cohomology of $\PP$ is the cohomology of $\Omega^\bullet M \otimes \real[t]$ with respect
to the differential $d + H \partial_t$ where $t$ is a variable of degree 2. Denote this cohomology by
$$H^*(\PP) = H^* (\Omega^\bullet M \otimes \real[t], d + H \partial_t).$$

A fiber-wise gauge transformation is  given by a the degree 0 vector field $B \partial_t$ where $B$ is any 2-form on $M$,
transforming the homological vector field by
$$d + H \partial_t \mapsto (d + H \partial_t + [ d + H \partial_t, B \partial_t] )= d + (H + dB) \partial_t.$$
The map 
$$e^{B \partial_t} : \Omega^\bullet M \otimes \real[t] \to \Omega^\bullet M \otimes \real[t]$$
induces an isomorphism between the dg-manifolds
$$e^{B \partial_t} : \PP' \stackrel{\cong}{\to} \PP$$ 
where $\PP'= (T[1]M \oplus \real[2], d + (H+dB) \partial_t)$ and, therefore, it induces an isomorphism in cohomologies
$$e^{B \partial_t}:H^*(\PP') \stackrel{\cong}{\to} H^*(\PP).$$

For a closed 3-form $H$,  the $H$-twisted cohomology of $M$ is defined as the cohomology of the $\integer /2$-graded
 complex $\Omega^{\rm{ev,od}} M$, with respect to the differential $d + H \wedge$, and this cohomology is defined by
 $$H^{{ev,od}}(M; H) = H^*(\Omega^{{ev,od}} M, d + H \wedge).$$
 
 Similarly, the map $e^{B \wedge}: \Omega^{{ev,od}} M \to \Omega^{{ev,od}} M$ produces an isomorphism of twisted complexes
 $$e^{B \wedge}:( \Omega^{{ev,od}} M , d+ (H+dB)\wedge ) \stackrel{\cong}{\to} ( \Omega^{{ev,od}} M , d+ H\wedge )$$
 inducing an isomorphism
 $$H^{{ev,od}}(M; H+dB) \cong H^{{ev,od}}(M; H).$$
 
We claim

\begin{proposition} \label{proposition iso twisted non-twisted}
Let $m$ be the dimension of $M$ and consider a closed 3-form $H$ on $M$.
Let  $$\PP = (T[1]M \oplus \real[2], d + H \partial_t)$$ be the associated dg-manifold
and let 
$$H^{{ev,od}}(M; H) = H^*(\Omega^{{ev,od}} M, d + H \wedge)$$
be the twisted cohomology of $M$ twisted by $H$.
Then, there are isomorphisms of groups
$$H^{ev}(M; H) = H^{2k}(\PP), \ \ H^{od}(M; H) = H^{2k+1}(\PP)$$
for any $k$ provided that the degree $2k$ or $2k+1$ respectively is greater than $m$.
\end{proposition}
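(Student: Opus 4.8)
The plan is to compare the complex $(\Omega^\bullet M \otimes \real[t], d + H\partial_t)$ computing $H^*(\PP)$ with the $\integer/2$-graded complex $(\Omega^{\mathrm{ev,od}}M, d + H\wedge)$ computing twisted cohomology, by exhibiting an explicit degree-preserving chain map in the range of degrees above $m = \dim M$. Recall that $t$ has degree $2$, so a function of degree $n$ on $\PP$ is a sum $\sum_{j\geq 0} \omega_{n-2j}\, t^{j}$ with $\omega_{n-2j} \in \Omega^{n-2j}M$, and the differential sends such an element to $\sum_j (d\omega_{n-2j} + H\wedge\omega_{n-2j})\,t^{j}$ (using $\partial_t t^j = j t^{j-1}$ carefully, or rather that $H\partial_t(\omega t^j) = j H\wedge\omega\, t^{j-1}$; I would fix the precise combinatorial normalization at the start). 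The point is that, once $n > m$, every summand $\omega_{n-2j}$ with $n-2j < 0$ or $n-2j > m$ vanishes, so a degree-$n$ function is a \emph{finite} sum of forms all of which have the same parity as $n$, i.e. it is exactly an element of $\Omega^{\mathrm{ev}}M$ or $\Omega^{\mathrm{od}}M$ with the $t$-powers merely bookkeeping which form sits in which slot. This gives a candidate map $\Phi_n : C^n(\PP) \to \Omega^{\mathrm{ev/od}}M$, $\sum_j \omega_{n-2j}t^j \mapsto \sum_j \omega_{n-2j}$ (possibly with sign/normalization factors $\pm 1/j!$ or similar chosen to make it a chain map), and I would first check directly that $\Phi_{n+1}\circ(d+H\partial_t) = (d + H\wedge)\circ\Phi_n$.

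Next I would show $\Phi_n$ is an isomorphism of cochain groups for $n > m$. Injectivity is immediate since the individual form-degrees $n-2j$ are distinct and $\Phi_n$ just forgets the (redundant) $t$-labels. For surjectivity onto $\Omega^{\mathrm{ev}}M$ (say $n$ even, $n > m$): given any even form $\eta = \sum_{i\ \mathrm{even}} \eta_i$, one must write each $\eta_i$ as $\omega_{n-2j}$ for the unique $j$ with $n - 2j = i$; this $j$ is $\geq 0$ precisely because $i \leq m < n$, so every homogeneous piece of $\eta$ lands in a legitimate slot, and $\Phi_n$ hits $\eta$. The same works for odd degrees. Combined with the chain-map property, $\Phi$ restricts to an isomorphism of complexes in the relevant range. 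One small care: "$\Phi$ is a chain map" involves $C^{n}(\PP) \to C^{n+1}(\PP)$, and I want both $n$ and $n+1$ to be $> m$; since the statement quantifies over a single parity, taking $n = 2k > m$ (resp. $2k+1 > m$) and noting $n+1, n-1$ are then also $> m-1$, i.e. $\geq m$, one checks the boundary case $n = m+1$ by hand to confirm no slot is lost.

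The main obstacle — really the only subtle point — is the boundary of the range: when $n$ is only slightly larger than $m$, one of the complexes is still "truncated" relative to the full $\integer/2$-graded twisted complex, and one must verify that no differential from degree $n$ lands in a degree where the truncation differs, and that the claimed isomorphism is between the \emph{full} cochain group $C^n(\PP)$ and the \emph{full} space $\Omega^{\mathrm{ev}}M$ (not a subspace). The cleanest way to organize this is to observe that for $n > m$ the map $t\cdot(-) : C^{n}(\PP) \to C^{n+2}(\PP)$ is an isomorphism (multiplication by $t$ shifts all slots up by two and, in this range, no slot exceeds $m$ before shifting and none becomes empty after), so the inverse system stabilizes: the complex $(\Omega^\bullet M\otimes\real[t], d+H\partial_t)$ in degrees $> m$ is literally $t$-periodic and isomorphic to the $2$-periodic twisted complex. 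Then Proposition \ref{proposition iso twisted non-twisted} follows by passing to cohomology, with $H^{2k}(\PP) \cong H^{\mathrm{ev}}(M;H)$ and $H^{2k+1}(\PP) \cong H^{\mathrm{od}}(M;H)$ for $2k > m$, resp. $2k+1 > m$. I would also remark that the gauge-invariance statements proved just above (the $e^{B\partial_t}$ and $e^{B\wedge}$ isomorphisms) are compatible with $\Phi$, so the identification is canonical up to the same $B$-field ambiguity on both sides, though this is not needed for the bare statement.
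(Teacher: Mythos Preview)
Your approach is correct and essentially the same as the paper's: define an explicit map $C^n(\PP)\to\Omega^{\mathrm{ev/od}}M$, check it is a chain map, and observe that it is a bijection once $n>m$ because every homogeneous summand has form-degree at most $m$. The paper simply makes your anticipated normalization explicit, taking $\phi(\omega\,t^k)=k!\,\omega$, which is exactly what is needed since $(d+H\partial_t)(\omega t^k)=d\omega\,t^k+kH\omega\,t^{k-1}$ and $k\cdot(k-1)!=k!$; the paper then notes, as you do, that the $k=0$ term causes no trouble in degrees $>m$ because a form of degree $>m$ vanishes.

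One small caution about your closing remark: the map $t\cdot(-):C^n(\PP)\to C^{n+2}(\PP)$ is indeed a bijection of graded pieces for $n>m$, but it is \emph{not} a chain map, since $(d+H\partial_t)(t\,x)=t\,(d+H\partial_t)x+H\wedge x$. So the complex is not ``literally $t$-periodic'' via multiplication by $t$; the periodicity you want is witnessed precisely by your $\Phi_n$ (equivalently the paper's $\phi$), not by $t\cdot(-)$. This does not affect your main argument, which stands.
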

 \begin{proof}
 
 Consider the homomorphism of vector spaces given by the rescaling
 \begin{align*}
 \phi: \Omega^\bullet M \otimes \real[t] & \to \Omega^{{ev,od}} M\\
  \omega t^k & \mapsto  k!  \cdot \omega,
 \end{align*}
 and note that $\phi$ preserves the $\integer /2$ grading.
 
 Since $$\phi((d + H \partial_t)\omega t^k)  = \phi(d \omega t^k + k H \omega t^{k-1})=  k! (d\omega  +  H \omega )$$
 we see that
 $$((d + H \wedge) \phi(\omega t^k)) = \phi((d + H \partial_t)\omega t^k),$$
 provided that $k \geq 1$.  
 
 If we denote by $C^j(\PP)$ the homogeneous forms on $\Omega^\bullet M \otimes \real[t]$ of degree $j$, we see that
 the map $\phi$ induces a map of complexes 
 $$\phi:  (C^{* >d}(\PP), d + H \partial_t)   \to (\Omega^{{ev,od}} M, d + H \wedge).$$
and moreover it induces an isomorphism of vector spaces
 $$\phi: C^{2k}(\PP) \stackrel{\cong}{\to} \Omega^{ev}M \ \ \ C^{2k+1}(\PP) \stackrel{\cong}{\to} \Omega^{od}M$$
 provided that $2k \geq m$.
 
 We conclude that the induced maps on cohomologies
 $$\phi : H^{2k}(\PP) \stackrel{\cong}{\to} H^{ev}(M,H), \ \ \phi : H^{2k+1}(\PP) \stackrel{\cong}{\to} H^{od}(M,H)$$
 are isomorphisms whenever $2k>m$ for the left hand side and $2k+1>m$ for the right hand side. 
  \end{proof}
 
 As a corollary, we have that the cohomology of $\PP$ becomes periodic for degrees greater than the dimension of the manifold $M$;
 that is
 $$H^j(\PP) \cong H^{j+2l}(\PP) \ \ \ \mbox{for} \ ÊÊ\  l\geq 0 \ Ê\ \mbox{and} \  \ j > \mbox{dim} (M).$$ 
 
 \begin{rem}
 The $H$-twisted cohomology $$H^*(M,H)$$ could be understood as the cohomology of the dg-manifold 
 $$\PP = (T[1]M \oplus \real[2], d + H \partial_t)$$
 for degrees greater than the dimension of the manifold $M$. The previous simple remark explains
 the following somewhat puzzling fact: one sometimes encounters some twisted cohomologies are zero in even \emph{and} in odd degrees.
  This  is explained  by noting that the cohomology
 of the associated dg-manifold $\PP$ is zero for degrees greater than the degree of the manifold, nevertheless this does not say anything
 about the cohomology of $\PP$ in lower degrees.
 
For example, when $M = S^3$ and $H$ is a volume form for the sphere, we have that $H^*(S^3, H)=0$ in both degrees. This
means that  the cohomology of $\PP=(T[1]S^3 \oplus \real[2], d + H \partial_t)$ is zero for degrees greater than 3; and this can be 
easily checked since the cohomology of $\PP$ is
$$H^0(\PP)=\real \ \  Ê{\rm{and}} \ \  H^{*>0}(\PP)=0.$$

With this setup in mind, most of the properties that  twisted cohomology satisfies are easily derived from the 
properties that ordinary cohomology satisfies. Hence, the twisted cohomology becomes a $\integer /2$-graded cohomology
theory satisfying the axioms of a generalized cohomology theory.
 \end{rem}
 
\subsubsection{Integration on the fibers on $\real[n]$-bundles for $n$ odd} \label{section pushforward}

Let $n$ an odd positive integer and consider the $\real[n]$-bundle $$ \RR=(\NN \oplus \real[n] , Q_\RR=Q_\NN + \Theta \partial_q)$$
over the dg-manifold $(\NN, Q_\NN)$ where $q$ is an odd variable of degree $n$, and $\Theta$ is a function of $\NN$ of degree $n+1$
such that $Q_\NN \Theta=0$;  denote the bundle map $ p: \RR \to \NN$.

Let $p_*$ be the pushforward map;
\begin{align*}
p:C^*(\NN) \otimes \Lambda[q] & \to C^*(\NN) \\
\omega q & \mapsto \omega\\
\eta & \mapsto 0,
\end{align*}
where $\omega$ and $\eta$ are functions on $\NN$. Note that it is a degree $-n$ map from $C^*(\RR)$ to $C^*(\NN)$ and, moreover, that $p_*$ is a map of complexes, i.e.
$$p_* \circ Q_\RR = Q_\NN \circ p_*.$$
Therefore, we have that the pushforward 
$$p_* : (\NN \oplus \real[n] , Q_\RR=Q_\NN + \Theta \partial_q) \to (\NN , Q_\NN)$$
is a map of dg-manifolds of degree $-n$ and, therefore, it induces a map of degree $-n$ on the cohomologies
$$p_*: H^*(\RR,Q_\NN + \Theta \partial_q) \to H^{*-n}(\NN, Q_\NN)$$
associated to the dg-manifolds.

\section{$\real[2]$-bundles over $\real[1]$-bundles over $T[1]M$ and T-duality} \label{section R2-R1-bundles}
Let $\RR=(T[1]M \oplus \real[1], Q_\RR=d + F \partial_q)$ be a $\real[1]$-bundle over $T[1]M$. 

A $\real[2]$-bundle over the dg-manifold $\RR$ is given by the graded manifold $(T[1]M \oplus \real[1])\oplus \real[2]$ together
with a choice of homological vector field that lifts $d+ F \partial_q$.
The algebra of functions of $(T[1]M \oplus \real[1])\oplus \real[2]$ becomes 
$$C^*((T[1]M \oplus \real[1])\oplus \real[2])=\Omega^\bullet M \otimes \Lambda [q] \otimes \real [t]$$ where $t$ is a variable of degree 2. A 
generic choice of derivation of degree 1 which lifts $d + F \partial_q$ is given by
$$d + F \partial_q +(H +  q\bar{F} ) \partial_t$$ 
where $H$ is a 3-form on $M$ and $\bar{F}$ is a 2-form on $M$; this derivation becomes homological
if it satisfies the Maurer-Cartan equation
$$[d + F \partial_q +(H + q\bar{F} ) \partial_t,d + F \partial_q+(H + q\bar{F} ) \partial_t]=0.$$
This last equation is equivalent to the equations
\begin{align*} dF&=0 \\d\bar{F} &=0\\dH+ F\wedge \bar{F}&=0.\end{align*}
So, a closed 2 -form $\bar{F}$ on $M$ such that the cohomology class 
$[F \wedge \bar{F}]=0$ vanishes, together with a choice of $3$ form $H$ which makes $F \wedge \bar{F}$ exact i.e. 
$$dH + F\wedge \bar{F}=0$$ 
will define a $\real[2]$-bundle over $\RR$.
Let us denote this dg-manifold by
$$\PP:= ((T[1]M \oplus \real[1]) \oplus \real[2], d + F \partial_q +(H + q\bar{F}) \partial_t ).$$ 

\begin{rem}
Note that the 3-form $H+q\bar{F}$ is automatically closed under the differential $d +F\partial_q$ since this differential is the one that
defines the structure on $\RR$:
$$(d +F\partial_q )(H+q\bar{F} )= dH +F\wedge \bar{F} + qd\bar{F}  =0.$$
\end{rem}

\subsection{Gauge transformations} In order to find when two homological vector fields 
on $(T[1]M \oplus \real[1]) \oplus \real[2]$
are homologous, we calculate the bracket
$$[d + F \partial_q +(H +  q\bar{F} )\partial_t,   \LL_X +A \partial_q +(B +  q\bar{FA} )\partial_t]$$
for $A$ and $\bar{A}$ 1-forms, $B$ a 2-form and $\LL_X$ the Lie derivative of a vector field $X$ of $M$; the element
$$\LL_X +A \partial_q +(B +  q\bar{A} )\partial_t$$ is a generic
 derivation of degree 0 of $\Omega^\bullet M \otimes \Lambda [q] \otimes \real [t]$.
The bracket becomes 
\begin{align*}
[d + F \partial_q +(H +  q\bar{F} )\partial_t,   \LL_X +A & \partial_q +(B + q \bar{A} )\partial_t]=\\
&(dA -\LL_X F )\partial_q  \\
+&(dB -\LL_X H +  F \wedge \bar{A} - A \wedge \bar{F})\partial_t\\
-&q ( d\bar{A}+ \LL_X \bar{F}) \partial_t \end{align*}
which implies that the homological vector fields
$$d + F \partial_q +(H +  q\bar{F})\partial_t$$
and
$$ d + (F +dA -\LL_X F )\partial_q + (H+dB -\LL_X H + F \wedge \bar{A}-  A \wedge \bar{F}+  q(\bar{F}- d\bar{A}-\LL_X \bar{F}) ) \partial_t$$
are homologous. 

\subsubsection{Fiber-wise gauge transformations} In the particular case on which $X=0$, we get that the gauge transformation, defined by 
$A,\bar{A}$ and $B$; via the degree 0 derivation $A \partial_q +(B +  q\bar{A})\partial_t$, transforms $F,\bar{F}$ and $H$ in the following way:
\begin{align*}
F & \mapsto F +dA\\
H & \mapsto H + dB  + F \wedge \bar{A} -  A \wedge \bar{F}\\
\bar{F}& \mapsto \bar{F} -d\bar{A}.
\end{align*}

\subsection{T-duality} \label{subsection T-duality} In this section, we will write the T-duality construction in the language of dg-manifolds.
We recover the isomorphism of twisted cohomologies of the dual pairs by producing a degree -1
morphism between the cohomologies of the dual dg-manifolds.
 We believe that this alternative point of view might be of interest.
 
 \subsubsection{} Consider the $\real[1]$-bundle over $\MM=(T[1]M, d)$
 $$\EE=(T[1]M \oplus \real[1], Q_\EE=d + F\partial_q)$$
 where $F$ is a closed 2-form on $M$ and $q$ is a variable of degree 1.
 Recall from Proposition \ref{proposition isomorphism with circle bundles} that if $F$ is the curvature
form of a circle bundle $E \to M$, then there is a canonical isomorphism $H^*(\EE)\cong H^*(E)$. 

Consider the $\real[2]$-bundle over $\EE$
$$\PP=((T[1]M \oplus \real[1])\oplus \real[2], Q_\PP=d + F\partial_q + (H + q \bar{F})\partial_t)$$
where $H + q \bar{F}$ is a closed three form on $\EE$; hence we have that
$\bar{F}$ is closed and $dH + F \bar{F}=0$. 

Since the equations $dF=0=d\bar{F}$ and  $dH + F \bar{F}=0$ are symmetric on $F$ and $\bar{F}$, we can consider the $\real[1]$-bundle
$$\bar{\EE}=(T[1]M \oplus \real[1], Q_{\bar{\EE}}=d + \bar{F}\partial_{\bar{q}})$$
together with the $\real[2]$-bundle over it 
$$\bar{\PP}=((T[1]M \oplus \real[1])\oplus \real[2], Q_{\bar{\PP}}=d + \bar{F}\partial_{\bar{q}} + (H + \bar{q} F)\partial_t).$$

Denoted by $\EE \times_\MM \bar{\EE}$ the dg-manifold 
$$\EE \times_\MM \bar{\EE}= ( T[1]M \oplus \real[1] \oplus \real[1],Q_{\EE \times_\MM \bar{\EE}}= d +  F\partial_{q}+\bar{F}\partial_{\bar{q}}),$$
note that this dg-manifold fits in the pullback of the diagram:
$$\xymatrix{ & \ar[ld]_{p} \EE \times_\MM \bar{\EE} \ar[rd]^{\bar{p}} & \\
\EE  \ar[rd]^\pi& & \ar[ld]_{\bar{\pi}}  \bar{\EE}\\
& \MM &
}$$
and consider the pullbacks of the $\real[2]$-bundles $\PP, \bar{\PP}$ to $\EE \times_\MM \bar{\EE}$ which becomes the dg-manifolds
\begin{align*}
p^*\PP &= ((T[1M \oplus \real[1] \oplus \real[1])\oplus \real[2],Q_{p^*\PP}=
 d +  F\partial_{q}+\bar{F}\partial_{\bar{q}} + (H + q \bar{F})\partial_t)\\
 \bar{p}^*\bar{\PP} &= ((T[1M \oplus \real[1] \oplus \real[1])\oplus \real[2],Q_{\bar{p}^*\bar{\PP}}=
 d +  F\partial_{q}+\bar{F}\partial_{\bar{q}} + (H + \bar{q} {F})\partial_t).
\end{align*}
Since the 3-forms $(H + q \bar{F})$ and $(H + \bar{q} {F})$ are cohomologous in $\EE \times_\MM \bar{\EE}$ via the 
2-form $\bar{q} q$, i.e.
$$(H + q \bar{F}) -(d +  F\partial_{q}+\bar{F}\partial_{\bar{q}})(\bar{q} q)=(H + \bar{q} {F}),$$
we have that the degree 0 map
$$e^{\bar{q} q \partial_t} : \Omega^\bullet M \otimes \Lambda[q, \bar{q}] \otimes \real[t] \stackrel{\cong}{\to}
 \Omega^\bullet M \otimes \Lambda[q, \bar{q}] \otimes \real[t]$$
 induces an isomorphism of dg-manifolds
 $$e^{\bar{q} q \partial_t} : p^*\PP \stackrel{\cong}{\to} \bar{p}^*\bar{\PP}.$$
Schematically we have 
$$\xymatrix{ &p^*\PP \ar[ld]^{\bf{p}}\ar[rd] \ar@{.>}@/^/[rr]^{e^{\bar{q} q \partial_t}}& &\bar{p}^*\bar{\PP}\ar[ld] \ar[rd]^{\bar{\bf{p}}}& \\  \PP \ar[rd] && \ar[ld]_{p} \EE \times_\MM \bar{\EE} \ar[rd]^{\bar{p}}& & 
\bar{\PP} \ar[ld]\\
& \EE  \ar[rd]^\pi& & \ar[ld]_{\bar{\pi}}  \bar{\EE}&\\
& &\MM &&
}$$
where $\bf{p}$ and $\bar{\bf{p}}$ denote the corresponding maps of dg-manifolds, and the dotted arrow corresponds to the vertical
gauge transformation defined previously.

Consider the  map of complexes \`a la Fourier-Mukai, which is the composition of the maps
$$ C^*( \PP) \stackrel{\bf{p}^*}{\To} C^*(p^*\PP) \stackrel{e^{\bar{q}q\partial_t}}{\To} C^*(\bar{p}^*\bar{\PP})
 \stackrel{\bar{\bf{p}}_*}{\To} C^*(\bar{\PP})[1]$$
where the pushfoward map $\bar{\bf{p}}_*$ is the one that was defined in section \ref{section pushforward}, and
$C^*(\bar{\PP})[1]^k= C^{k-1}(\bar{\PP})$ . Denoting the composition by
$$T:= \bar{\bf{p}}_* \circ  e^{\bar{q}q\partial_t}\circ \bf{p}^*$$
we get that the map
$$T: ( C^*( \PP), Q_\PP) \To (C^*(\bar{\PP})[1], Q_{\bar{\PP}})$$
is a map of complexes.

\begin{definition}
The pair of dg-manifolds $( \EE, \PP)$ and $(\bar{\EE},\bar{\PP})$ over $\MM$ are said to be {\it{T-dual}} and they satisfy the usual
relations\begin{align*}
\pi_*(H + q \bar{F}) &=\bar{F}\\
\bar{\pi}_*(H + \bar{q} {F}) &={F}.
\end{align*}
The map of complexes of degree -1
$$T: ( C^*( \PP), Q_\PP) \To (C^*(\bar{\PP})[1], Q_{\bar{\PP}})$$
is called the {\it{T-duality map}}.
\end{definition}

\begin{theorem} 
\label{theorem T-duality map}
Let $( \EE,\PP)$ and $(\bar{\EE}, \bar{\PP})$ be a T-dual pair of $\real[2]$-bundles over $\real[1]$-bundles over $M$.
Then the T-dual map 
$$T: ( C^*( \PP), Q_\PP) \To (C^*(\bar{\PP})[1], Q_{\bar{\PP}})$$
fits in the short exact sequence of complexes
\begin{align*}
0 \To (\Omega^* M, d) \hookrightarrow ( C^*( \PP), Q_\PP) \stackrel{T}{\To} (C^*(\bar{\PP})[1], Q_{\bar{\PP}}) \To 0
\end{align*}
and, therefore, it induces a long exact sequence in cohomologies
\begin{align*}
\to H^{k}(\PP, Q_\PP)  \stackrel{T}{\to} H^{k-1}(\bar{\PP}, Q_{\bar{\PP}}) \to H^{k+1}M 
\to H^{k+1}(\PP, Q_\PP)  \stackrel{T}{\to} H^{k}(\bar{\PP}, Q_{\bar{\PP}})\to
\end{align*}
In particular, when $k \geq{\rm{dim}} (M)$ the T-dual map induces an isomorphism in cohomologies
$$T:H^{k+1}(\PP, Q_\PP)  \stackrel{\cong}{\to} H^{k}(\bar{\PP},Q_{\bar{\PP}}).$$
\end{theorem}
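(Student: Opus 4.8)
The plan is to exhibit the short exact sequence of complexes directly by analyzing the map $T = \bar{\mathbf{p}}_* \circ e^{\bar q q \partial_t} \circ \mathbf{p}^*$ in coordinates, and then to feed it into the long exact sequence in cohomology, identifying the connecting term with $H^*(M,d)$ via Proposition \ref{proposition isomorphism with circle bundles}. First I would write a general element of $C^*(\PP) = \Omega^\bullet M \otimes \Lambda[q] \otimes \real[t]$ as $\sum_k (\alpha_k + q\beta_k)\, t^k$ with $\alpha_k,\beta_k \in \Omega^\bullet M$, and compute $T$ explicitly: pulling back to $\EE \times_\MM \bar{\EE}$ introduces no new generators, applying $e^{\bar q q \partial_t}$ replaces $t^k$ by $(t + \bar q q)^k = t^k + k\bar q q\, t^{k-1}$ (since $(\bar q q)^2 = 0$), and then $\bar{\mathbf{p}}_*$ kills everything not proportional to $\bar q$ and strips the $\bar q$. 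The upshot is a clean formula of the shape $T\big(\sum_k(\alpha_k + q\beta_k)t^k\big) = \sum_k\big(k\,\alpha_k\, q\, t^{k-1} \pm \beta_k\, t^k\big)$ landing in $C^*(\bar\PP)[1]$, with the degree shift accounted for by the leftover $\bar q$ (degree $1$) being discarded. From this formula, surjectivity of $T$ is immediate: given any element of $C^*(\bar\PP)$, one solves for the $\beta_k$ from the $t^k\bar{q}^0$-part and for the $\alpha_k$ from the $q t^{k-1}$-part, the only constraint being that $\alpha_k$ for $k\geq 1$ is determined while $\alpha_0$ is unconstrained — which is exactly the source of the kernel.

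Next I would identify the kernel. An element $\sum_k(\alpha_k + q\beta_k)t^k$ maps to zero iff $\beta_k = 0$ for all $k$ and $k\alpha_k = 0$ for all $k\geq 1$, i.e. $\alpha_k = 0$ for $k \geq 1$, leaving precisely the elements $\alpha_0 \in \Omega^\bullet M \subset C^*(\PP)$ sitting in the $q^0 t^0$-summand. On this subspace the homological vector field $Q_\PP = d + F\partial_q + (H + q\bar F)\partial_t$ restricts to $d$ (the $F\partial_q$ and $\partial_t$ terms push $\Omega^\bullet M$ out of the subspace only if we are not careful — so I need to check that $\ker T$ is a \emph{subcomplex}: indeed $Q_\PP(\alpha_0) = d\alpha_0 + F\partial_q(\alpha_0) + \dots$, but $\partial_q$ and $\partial_t$ annihilate forms pulled back from $M$, so $Q_\PP \alpha_0 = d\alpha_0 + (\text{terms in } q \text{ or } t)$; wait — $F\partial_q$ acting on a function with no $q$ gives zero, and $(H+q\bar F)\partial_t$ acting on a function with no $t$ gives zero, so in fact $Q_\PP|_{\Omega^\bullet M} = d$ exactly). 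Hence $(\ker T, Q_\PP) = (\Omega^* M, d)$ as claimed, and the inclusion $\Omega^* M \hookrightarrow C^*(\PP)$ is the stated injection. This establishes the short exact sequence $0 \to (\Omega^* M, d) \to (C^*(\PP), Q_\PP) \xrightarrow{T} (C^*(\bar\PP)[1], Q_{\bar\PP}) \to 0$, and the long exact sequence in cohomology follows formally from the snake lemma, with the connecting homomorphism $H^{k-1}(\bar\PP) \to H^{k+1}(M)$ producing the stated six-term pattern once degrees are bookkept (the degree $-1$ of $T$ and the degree $+2$ of the connecting map combine correctly because $C^*(\bar\PP)[1]$ is shifted).

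Finally, for the last sentence I would invoke the corollary to Proposition \ref{proposition iso twisted non-twisted}: the cohomology $H^j(\PP)$ (and likewise $H^j(\bar\PP)$) stabilizes and becomes periodic for $j > \dim M$, but more to the point $H^{k+1}(M) = 0$ whenever $k+1 > \dim M$, i.e. $k \geq \dim M$. Feeding $H^{k+1}(M) = 0$ into the long exact sequence collapses the connecting maps on both sides of $T : H^{k+1}(\PP) \to H^{k}(\bar\PP)$, forcing $T$ to be an isomorphism in that range. The main obstacle I anticipate is purely bookkeeping: getting the degree shifts, the signs in the explicit formula for $T$, and the precise range of $k$ all mutually consistent — in particular making sure that the connecting homomorphism really lands in $H^{k+1}(M)$ with the indicated shift, and that "greater than $\dim M$" versus "$\geq \dim M$" is applied on the correct side of each map. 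The conceptual content — that $T$ is "integration over the fiber after a Fourier–Mukai twist" and that its failure to be an isomorphism is measured exactly by the base $M$ — is already transparent from the coordinate formula; the work is in verifying it is a clean epimorphism with the expected kernel.
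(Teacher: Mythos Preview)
Your proposal is correct and follows essentially the same route as the paper: compute $T$ explicitly on monomials $\omega t^j$ and $q\eta t^l$, read off that $\ker T = \Omega^*M$ (on which $Q_\PP$ restricts to $d$), and use the vanishing of $H^{>\dim M}(M)$ in the resulting long exact sequence. Two cosmetic points: in your displayed formula for $T$ the variable in the target should be $\bar q$ rather than $q$ (since $C^*(\bar\PP) = \Omega^\bullet M \otimes \Lambda[\bar q]\otimes\real[t]$), and the paper goes slightly further by writing the connecting homomorphism explicitly as $[\sum(\omega_i+\bar q\eta_i)t^i]\mapsto (-1)^{|\omega_0|}F\omega_0$, though this is not needed for the conclusion you draw.
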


\begin{proof}
Let us calculate explicitly the map $T$ for generic elements of the form $\omega t^j$ and $q\eta  t^l$ for $\omega$ and $\eta$
differential forms on $M$. We have that
\begin{align*}
T(\omega t^j) &= \bar{\bf{p}}_* (e^{\bar{q}q\partial_t}(\omega t^j))= \bar{\bf{p}}_* (\omega t^j + j\bar{q}q \omega t^{j-1})=(-1)^{|\omega|} j\bar{q} \omega t^{j-1}\\
T(q \eta  t^l) &= \bar{\bf{p}}_* (e^{\bar{q}q\partial_t}(q\eta  t^l))= \bar{\bf{p}}_* (q\eta  t^l) =(-1)^{|\eta|}  \eta t^{l}
\end{align*}
and, therefore, $T(\omega)=0$ for $\omega$ a differential form on $M$. Therefore
$${\rm{ker}}(T) = \Omega^*M$$
and since $Q_\PP$ restricted to $\Omega^*M$ is precisely the De Rham derivation, we have the desired
short exact sequence of complexes
\begin{align*}
0 \To (\Omega^* M, d) \hookrightarrow ( C^*( \PP), Q_\PP) \stackrel{T}{\To} (C^*(\bar{\PP})[1], Q_{\bar{\PP}}) \To 0.
\end{align*}

The connection homomorphism
$$\beta : H^{k-1}(\bar{\PP}, Q_{\bar{\PP}}) \to H^{k+1}M$$
is defined as follows: take a cocycle representing the cohomology class in $H^{k-1}(\bar{\PP}, Q_{\bar{\PP}})$ and
write it as $$\sum_{i \geq 0} (\omega_i + q \eta_i) t^i.$$
Then 
$$\beta(\sum_{i \geq 0} (\omega_i + q \eta_i) t^i):= (-1)^{|\omega_0|} F \omega_0$$
since we have that $T((-1)^{|\omega_0|}q \omega_0) = \omega_0$ and the restriction of 
$$(d + F \partial_q +(H + q \bar{F})\partial_t) ((-1)^{|\omega_0|}q \omega_0)$$to $\Omega^*M$ is precisely
$(-1)^{|\omega_0|} F \omega_0$.

The connection homomorphism is trivial whenever $|\omega_0| + 2$ is greater than the dimension of the manifold. Therefore
we have that, for $k \geq{\rm{dim}} (M)$, the T-dual map induces an isomorphism in cohomologies
$$T:H^{k+1}(\PP, Q_\PP)  \stackrel{\cong}{\to} H^{k}(\bar{\PP},Q_{\bar{\PP}}).$$
\end{proof}
Applying Proposition \ref{proposition iso twisted non-twisted} to the dg-manifolds $\EE$ and $\bar{\EE}$, we obtain the T-duality isomorphism
in twisted cohomologies that was proven in \cite{BouwknegtMathai1,BouwknegtMathai2}.
\begin{cor}
Let $( \EE,\PP)$ and $(\bar{\EE}, \bar{\PP})$ be a T-dual pair of $\real[2]$-bundles over $\real[1]$-bundles over $M$. Then the T-dual 
map induces an isomorphism of degree -1 of the twisted cohomologies
$$ H^{ev,od}(\EE, d + F \partial_q + (H + q \bar{F})\wedge) \stackrel{\cong}{\to} H^{od,ev}(\bar{\EE}, d + \bar{F} \partial_{\bar{q}} + (H +  \bar{q}F)\wedge)$$
\end{cor}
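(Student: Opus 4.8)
The plan is to deduce the corollary from Theorem~\ref{theorem T-duality map} by combining the $\integer/2$-periodicity of the cohomologies $H^*(\PP)$ and $H^*(\bar\PP)$ in high degrees with the identification of that periodic part with twisted cohomology supplied by Proposition~\ref{proposition iso twisted non-twisted}. First I would recall that, by Proposition~\ref{proposition iso twisted non-twisted} applied to the $\real[2]$-bundle $\PP$ over the $\real[1]$-bundle $\EE$ (equivalently, thinking of $\PP$ as a $\real[2]$-bundle over $T[1]$ of the base of $\EE$, or more directly unwinding the complex $\Omega^\bullet M\otimes\Lambda[q]\otimes\real[t]$ with the rescaling $\phi$ of that proof sending $\omega t^k\mapsto k!\,\omega$), one gets, for degrees exceeding $\dim M$,
$$H^{2k}(\PP)\;\cong\;H^{ev}\bigl(\EE,\,d+F\partial_q+(H+q\bar F)\wedge\bigr),\qquad H^{2k+1}(\PP)\;\cong\;H^{od}\bigl(\EE,\,\cdots\bigr),$$
and likewise $H^{2k}(\bar\PP)\cong H^{ev}(\bar\EE,\,d+\bar F\partial_{\bar q}+(H+\bar q F)\wedge)$ and $H^{2k+1}(\bar\PP)\cong H^{od}(\bar\EE,\cdots)$. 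Here the twisted de Rham complex of $\EE$ means the $\integer/2$-graded complex $\Omega^\bullet M\otimes\Lambda[q]$ with differential $d+F\partial_q+(H+q\bar F)\wedge$, and similarly for $\bar\EE$; one should note in passing that these twisted complexes of the $\real[1]$-bundles are the ones appearing in \cite{CavalcantiGualtieri}.

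Next I would invoke the last sentence of Theorem~\ref{theorem T-duality map}: for every $k\geq\dim M$ the T-duality map $T$ induces an isomorphism
$$T:H^{k+1}(\PP,Q_\PP)\;\xrightarrow{\;\cong\;}\;H^{k}(\bar\PP,Q_{\bar\PP}).$$
Taking $k$ even and then $k$ odd (both large), the source and target land in opposite parities, so $T$ gives $H^{ev}(\EE)\xrightarrow{\cong}H^{od}(\bar\EE)$ and $H^{od}(\EE)\xrightarrow{\cong}H^{ev}(\bar\EE)$, which is exactly the claimed degree $-1$ isomorphism of twisted cohomologies. To be careful I would check that the isomorphisms of Proposition~\ref{proposition iso twisted non-twisted} are compatible with the degree shift $[1]$ implicit in the statement of Theorem~\ref{theorem T-duality map}, i.e.\ that the square relating $T$ to the Fourier--Mukai type map $\int e^{\bar q q}$ on twisted forms commutes up to the scalars $k!$ introduced by $\phi$; since $\phi$ is an isomorphism of complexes in the relevant degree range this only affects $T$ by an invertible rescaling and does not disturb the conclusion.

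The main obstacle, such as it is, is bookkeeping rather than mathematics: one must make sure the twisted complex attached to the $\real[1]$-bundle $\EE$ (with its extra generator $q$ of degree $1$) is literally the one whose cohomology Proposition~\ref{proposition iso twisted non-twisted} computes, since that proposition as stated concerns $\real[2]$-bundles over $T[1]M$ with twist a $3$-form $H$, whereas here the ``base'' carries the odd generator $q$ and the $3$-form twist is $H+q\bar F$. Resolving this amounts to observing that the proof of Proposition~\ref{proposition iso twisted non-twisted} only used that the twist is a closed $3$-form in the ambient dg-manifold and that the rescaling $\phi$ is $\real[t]$-linear; both hold verbatim with $M$ replaced by $\EE$ and $H$ by $H+q\bar F$, so the argument transports without change. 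Once that identification is in place the corollary is immediate from the cohomology isomorphism in Theorem~\ref{theorem T-duality map}.
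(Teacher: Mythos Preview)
Your proposal is correct and follows exactly the route the paper takes: the paper's entire argument is the one-line remark preceding the corollary, ``Applying Proposition~\ref{proposition iso twisted non-twisted} to the dg-manifolds $\EE$ and $\bar{\EE}$,'' combined with the isomorphism from Theorem~\ref{theorem T-duality map}. You simply make explicit the bookkeeping the paper leaves implicit---namely that Proposition~\ref{proposition iso twisted non-twisted} carries over verbatim when $T[1]M$ is replaced by $\EE$ and $H$ by the closed $3$-form $H+q\bar F$---and then read off the parity shift from the degree $-1$ isomorphism in high degrees.
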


Here we should recall that Proposition \ref{proposition isomorphism with circle bundles} tells us that, in the case that $F$ is the curvature form of a $S^1$-principal bundle $E \stackrel{\pi}{\to} M $,
then the complex $C^*(\EE)$ is quasi-isomorphic to $\Omega^\bullet E$ and, therefore, the twisted complex
$$(C^*(\EE),d + F \partial_q + (H + q \bar{F})\wedge))$$
is quasi-isomorphic to the twisted complex
$$(\Omega^\bullet E, d_E + (H + \theta \bar{F})\wedge)$$
where $\theta$ is a choice of connection 1-form of the $S^1$-bundle with curvature $F$. This previous argument has as a corollary the usual T-dual isomorphism
for the twisted cohomology of T-dual pairs \cite{BouwknegtMathai1, BouwknegtMathai2}.

\subsection{Topological interpretation}  Recall that the rational homotopy type of the Eilenberg-Mclane spaces $K(\integer, n)$ is
equivalent to the graded symmetric algebras $ S[z]$ generated by one variable of degree $n$ i.e. $S[z]$ is a polynomial algebra
if $n$ is even and is an exterior algebra if $n$ is odd. 

Then we should think that the analogue in the category of topological spaces of a $\real[n]$-bundle over a dg-manifold is a
$K(\integer, n)$-bundle over a space. Recall that $K(\integer,n) = B^{n-1}S^1$ and that if $A$ is an abelian group, then $BA$ can be endowed with the structure of an abelian group. This means that the model $B^{n-1}S^1$ endows the Eilenberg-Maclane spaces with an
abelian group structure and, therefore, it is plausible to work with principal $K(\integer,n)$-bundles.

The analogue of the construction done in the previous section should be understood as a principal $K(\integer,2)$-bundle $P$ over 
the total space of a principal $S^1$-bundle $E$ over the manifold $M$.
The closed 2-form $F$ on $M$ should be thought as the curvature of the principal bundle $E$, 
the variable $q$ should be thought as the connection 1-form on $E$, and the 3-form $H + q\bar{F}$ on $E$
should be thought as the curvature of the gerbe that classifies the $K(\integer,2)$-bundle. Again, note that
the form $H + q\bar{F}$ is closed under the differential $d + F \partial_q$ if and only if $dH + F \bar{F} = 0$ and $d\bar{F}=0$.

\section{Symmetries of $\real[n]$-bundles}

\subsection{Symmetries of dg-manifolds} 
Recall that a homological vector field $Q$ on the graded manifold $P$ is the same as a Maurer-Cartan element on the
graded Lie algebra $\Vect^*(P)$.
Then any vector field   $\alpha \in \Vect^0(P)$ of degree 0 may define another Maurer-Cartan element by taking the action
on $Q$ of the exponential of the adjoint action of $\alpha$
$$Q \mapsto e^{({\rm ad}_\alpha)}Q:= Q + [\alpha, Q]  + \frac{1}{2} [\alpha, [\alpha,Q]] + \cdots$$
whenever we know that the series above converge. The infinitesimal version of this action is given by the adjoint action
of $\alpha$ on $Q$ and, therefore, the action is trivial whenever $[\alpha, Q]=0$.We say then that the infinitesimal symmetries of the Maurer-Cartan element are given by vector fields $\alpha$ of
degree 0 such that the adjoint action of $\alpha$ on $Q$ vanishes, i.e. $[\alpha, Q]=0$. Note that these infinitesimal symmetries
of $Q$ become a Lie algebra with respect to the brackets of $\Vect^0(P)$, as we have that for $\alpha_1$ and $\alpha_2$ commuting with $Q$,  the equality $[[\alpha_1,\alpha_2],Q]=0$ follows from the Jacobi identity and the fact that $Q$ is a homological vector field.

Furthermore, note that for any vector field $\beta$ of degree -1, the degree 0 vector field $[\beta,Q]$ commutes with $Q$ (again because of the Jacobi identity) and, therefore, it gives
an infinitesimal symmetry of $Q$. This means that we have to see the symmetries of the dg-manifold $P$ as a differential graded Lie algebra, where the differential is defined by the operator $[Q, \_]$ and the bracket is the one for vector fields.

\begin{definition}
Let $P$ be a dg-manifold with homological vector field $Q$. The (infinitesimal) symmetries of the
dg-manifold $P$ with homological vector field $Q$ is the differential
graded Lie algebra ${\gr{sym}}^*(P,Q)$ with
\begin{equation*}
{\gr{sym}}^q(P,Q) = \left\{
\begin{array}{lcl}
\Vect^q(P) & {\rm for} & q < 0 \\
\{ \alpha \in \Vect^0(P) | [\alpha,Q]=0 \} & {\rm for} & q =0 \\
0 & {\rm for} & q > 0 \\
\end{array}
\right.
\end{equation*}
 whose differential is $[Q,\_]$ and the bracket is the bracket of vector fields.
\end{definition}

\subsubsection{The derived algebraic structure} To any negatively graded differential graded Lie algebra  there is an associated dg-Leibniz algebra (see
\cite{Uchino} for the explicit construction and also \cite{KosmannDerived, Kosmann, VoronovDerived1, VoronovDerived2, GetzlerDerived}).

\begin{definition}
The derived dg-Leibniz algebra ${\rm{D}}{\gr{sym}}^*(P)$ of
${\gr{sym}}^*(P,Q)$ is the complex $${\rm{D}}{\gr{sym}}^*(P,Q)
:={\gr{sym}}^{* < 0}(P,Q)[1]$$ together with the differential
$\delta:=[Q_P,\_]$ and the derived bracket
$$\lf a,b \rf := (-1)^{\| a\|}[[Q_P,a],b].$$
\end{definition}

It is a fact that ${\rm{D}}{\gr{sym}}^*(P,Q)$
becomes a dg-Leibniz algebra; namely that $\delta$ and $\lf , \rf$
satisfy the properties
\begin{eqnarray*}
\delta \lf a,b \rf  &=& \lf \delta a, b\rf + (-1)^{\| a \|}\lf a, \delta b\rf\\
\lf a,\lf b,c \rf \rf &=& \lf \lf a,b \rf, c \rf + (-1)^{\| a \|
\| b\|} \lf b, \lf a,c \rf \rf
\end{eqnarray*}
where $\|a\|$ denotes the degree of $a$ in
${\rm{D}}{\gr{sym}}^*(P,Q)$ and, therefore, $\|a\| = |a|+1$ where
$|a|$ is the degree of $a$ in ${\gr{sym}}^*(P,Q)$.

Note that there is a canonical graded action
\begin{align*}
{\gr{sym}}^*(P,Q) \times {\rm{D}}{\gr{sym}}^*(P,Q) & \to {\rm{D}}{\gr{sym}}^*(P,Q)\\
a \cdot b & \mapsto [a,b]
\end{align*}
whose properties follow from the fact that $[Q,\_]$ and $[,]$ form a differential graded Lie algebra. In particular, note that
since the elements $a \in {\gr{sym}}^0(P,Q)$ satisfy $[Q,a]=0$, then ${\gr{sym}}^0(P,Q)$ acts by derivations
on the Leibniz algebra $ ({\rm{D}}{\gr{sym}}^0(P,Q), \lf, \rf)$.

\subsection{Symmetries of $\real[n]$-bundles over $T[1]M$.}
Let $\RR=(T[1]M \oplus \real[n], Q_\RR=d + \Theta \partial_t)$ be a $\real[n]$-bundle over $\MM$ where $\Theta \in \Omega^{n+1}_{\rm{cl}}M$
and $t$ is a variable of degree $n$.
Since the bracket of $Q_\RR$ with a generic degree $0$ vector field gives
$$[d + \Theta \partial_t, \LL_X + B \partial_t]= ( dB - \LL_X \Theta ) \partial_t$$ then the degree $0$
symmetries of $\RR$ is the Lie algebra
$${\gr{sym}}^0(\RR,Q_\RR) = \{\LL_X + B \partial_t | X \in \gr XM, B \in \Omega^nM \ \mbox{and} \ (dB - \LL_X \Theta)=0 \},$$
and the negatively graded symmetries are
\begin{align*}
{\gr{sym}}^{-1}(\RR,Q_\RR) &= \{\iota_Y + A \partial_t | Y \in \gr XM  \ \mbox{and} \ A \in \Omega^{n-1}M \}\\
{\gr{sym}}^{-k}(\RR,Q_\RR) &= \{ \eta \partial_t | \eta \in \Omega^{n-k}M \} \ \ \mbox{for} \ \ k > 1.
\end{align*}

The differential in ${\gr{sym}}^*(\RR,Q_\RR)$ becomes
\begin{align*} 
[ Q_\RR, \LL_X + B\partial_t ]   &= 0 \\
[ Q_\RR, \iota_X + A \partial_t ]   & =\LL_X + (d A+\iota_X\Theta ) \partial_t \nonumber \\
[  Q_\RR, \eta \partial_t  ]    &=   (d \eta) \partial_t, \nonumber
\end{align*}
 and the brackets become
 \begin{align*}
 [\LL_{X_0} + B_0 \partial_t, \LL_{X_1} + B_1\partial_t]  &= \LL_{[X_0,X_1]} + (\LL_{X_0}B_1 - \LL_{X_1}B_0)\partial_t\\
 [\LL_X + B \partial_t, \iota_Y +A \partial_t]  & =\iota_{[X,Y]} + (\LL_XA - \iota_YB)\partial_t\\
 [\LL_X + B \partial_t, \eta \partial_t] &=  (\LL_X \eta)\partial_t\\
 [\iota_{Y_0} + A_0 \partial_t, \iota_{Y_1} + A_1 \partial_t]  & = (\iota_{Y_0}A_1 + \iota_{Y_1}A_0)\partial_t\\
  [\iota_X + \alpha \partial_t, \eta \partial_t] & =(\iota_X \eta) \partial_t.
 \end{align*}

 Note that, when the $n+1$ form $\Theta=0$, the differential graded Lia algebra structure defined
 above is the same one that was defined by Dorfman in \cite{Dorfman}.
 
 \subsection{Derived symmetries of $\real[n]$-bundles over $T[1]M$}
 The derived dg-Leibniz algebra of the $\real[1]$-bundle $\RR$ over $\MM$ is
$$ {\rm{D}}{\gr{sym}}^k(\RR,Q_\RR) \cong \left\{
\begin{array}{ccr}
\gr X M \oplus \Omega^{n-1}M & {\rm{if}} & k=0 \\
\Omega^{n-1-k}M & {\rm{if}} & k <0
\end{array} \right.$$
where the differential $[Q_\RR, \_]$ becomes the De Rham differential
$$\Omega^0M \stackrel{d}{\To} \cdots \stackrel{d}{\To}
\Omega^{n-2}M \stackrel{d}{\To} \gr X M \oplus \Omega^{n-1}M,$$
and the brackets are given by the formulas
\begin{eqnarray*}
\lf \iota_{X_0} + A_0 \partial_t , \iota_{X_1} +  A_1 \partial_t \rf
  & = & \iota_{[X_0,X_1]} + \left( \LL_{X_0} A_1 - \iota_{X_1} dA_0 -\iota_{X_1}
\iota_{X_0} \Theta \right) \partial_t\\
\lf \iota_X + \alpha \partial_t , \eta \partial_t \rf & = & \LL_X
\eta \partial_t \\
\lf \eta \partial_t,  \iota_X+A \partial_t\rf &=& -(\iota_X d \eta) \partial_t\\
 \lf \mu \partial_t, \eta \partial_t \rf &=& 0.
\end{eqnarray*}

\subsubsection{Derived symmetries of $\real[1]$-bundles}
The derived symmetries of $(T[1]M \oplus \real[1], d + F \partial_q)$ is simply the Lie algebra
$(\gr X M \oplus C^\infty M, \lf,\rf)$ where the bracket is
$$\lf X \oplus f, Y \oplus g \rf = [X,Y] \oplus (X(g)-Y(f) - \iota_X \iota_YF).$$
Whenever $F$ is the curvature 2-form of a principal $S^1$-bundle $E \to M$; this Lie algebra
is isomorphic to the Lie algebra $\gr X E^{S^1}$ of invariant vector fields on $E$.

\subsubsection{Derived symmetries of $\real[2]$-bundles}
In the case that $n=2$ and whenever $\Theta=H$ is  a closed three form, the derived
algebra of $(T[1]M \oplus \real[2], d + H \partial_t)$ becomes the complex
$$ \Omega^0 M \stackrel{d}{\To} \gr X M \oplus \Omega^1 M$$
where the bracket $\lf, \rf$ is precisely the $H$-twisted
Courant-Dorfman bracket of the exact Courant algebroid $TM \oplus
T^*M$ (cf. \cite{Bursztyn, Roytenberg}).

\subsection{Symmetries of $\real[2]$-bundles over $\real[1]$-bundles}
For the $\real[2]$-bundle over the $\real[1]$-bundle 
$$\PP:= ((T[1]M \oplus \real[1]) \oplus \real[2], Q_\PP=d + F \partial_q +(H +  q\bar{F} ) \partial_t )$$
as in chapter \ref{section R2-R1-bundles}, we have that its differential graded Lie algebra of symmetries ${\gr{sym}}^*(\PP, Q_\PP)$ is defined by 
\begin{align*}
{\gr{sym}}^{* < -2}(\PP,Q_\PP)&=0\\
{\gr{sym}}^{-2}(\PP,Q_\PP)&\cong (C^\infty M)\partial_t\\
{\gr{sym}}^{-1}(\PP,Q_\PP)&\cong \gr X M \oplus (C^\infty M)\partial_q \oplus ( \Omega^1M \oplus C^\infty M)\partial_t \\
{\gr{sym}}^0(\PP,Q_\PP)&=\{ \scr X \in {\rm Vect}^{0}(\PP) | [Q_\PP, \scr X]=0 \}\\
\end{align*}
with differential $[Q_\PP, \_]$ 
$$0 \to {\gr{sym}}^{-2}(\PP,Q_\PP) \stackrel{[Q_\PP, \_]}{\To} {\gr{sym}}^{-1}(\PP,Q_\PP) \stackrel{[Q_\PP, \_]}{\To} {\gr{sym}}^{0}(\PP,Q_\PP)  $$
and with bracket $[,]$. 

The elements in ${\gr{sym}}^{0}(\PP, Q_\PP)$, which are the degree zero derivations that commute with $Q_\PP$, are the elements 
$$ \scr X = \LL_X +A \partial_q +(B +  q \bar{A} )\partial_t  \in {\rm Vect}^{0}(\PP)$$
such that $[Q_\PP, \scr X]=0$, and this is equivalent to the equations
\begin{align*}
dA -\LL_X F =&0 \\
 dB -\LL_X H +  F \wedge \bar{A} -  A \wedge \bar{F} =&0\\
  d\bar{A} + \LL_X \bar{F}=&0.
\end{align*}

 \subsubsection{The differential structure}
Vector fields of degree $-2$ will be denoted by
$h \partial_t $
for $h$ in $C^\infty M$ and vector fields of degree $-1$ will be denoted by
$$\iota_Y +f \partial_q +(C + q\bar{f}) \partial_t$$
with $\iota_Y$ the contraction on the vector field $Y$ of $M$,$f,\bar{f}$ in $C^\infty M$ and $C$ in $\Omega^1M$.
Then $[Q_\PP, \_]$  acts as follows:
\begin{align*}
[Q_\PP, h \partial_t  ] = & dh \partial_t\\
[Q_\PP, \iota_Y +f \partial_q +(C + q\bar{f}) \partial_t ] = & 
 \LL_Y +(df  + \iota_YF ) \partial_q \\ + &(dC +\iota_YH +  F\bar{f} +f \bar{F})\partial_t\\
  - & q(d\bar{f}+ \iota_Y\bar{F})\partial_t\\
 [Q_\PP, \LL_X +A \partial_q +(B +  q \bar{A} )\partial_t ] = & 0
\end{align*}

\subsubsection{The graded Lie algebra structure}
The graded Lie algebra structure of ${\gr{sym}}^{*}(\PP,Q_\PP)$ is given
by the following explicit formulas.

For two degree zero derivations, the bracket is:
\begin{align*}
 [\LL_{X_0} +A_0 \partial_q +(B_0 +  q \bar{A}_0 )\partial_t, \LL_{X_1} +A_1 \partial_q  &+(B_1 +   q \bar{A}_1 )\partial_t ] =  \\
 &\LL_{[X_0,X_1]} \\ + & (\LL_{X_0} A_1-\LL_{X_1} A_0)\partial_q  \\
+ & (\LL_{X_0} B_1-\LL_{X_1} B_0 +  A_0\bar{A}_1 - A_1\bar{A}_0)\partial_t \\+  &   q(  \LL_{X_0} \bar{A}_1   -\LL_{X_1} \bar{A}_0)\partial_t
 \end{align*}
and note that when $X_0=0=X_1$ the bracket simplifies to 
$$[A_0 \partial_q +(B_0 +  q \bar{A}_0 )\partial_t, A_1 \partial_q +(B_1 +  q \bar{A}_1 )\partial_t ]=(A_0\bar{A}_1 - A_1\bar{A}_0)\partial_t.$$

For two degree -1 derivations, the bracket is:
\begin{align*}
 [\iota_{Y_0} +f_0 \partial_q +(C_0 +  q \bar{f}_0 )\partial_t, \iota_{Y_1} +f_1 \partial_q +(C_1 +  q \bar{f}_1 )\partial_t ] &= \\
 (\iota_{X_0} C_1  & + \iota_{X_1} C_0 +f_0\bar{f}_1 + f_1\bar{f}_0)\partial_t
 \end{align*}
and the remaining two brackets are
\begin{align*}
 [\LL_{X} +A \partial_q +(B +  q \bar{A} )\partial_t,\iota_{Y} +f \partial_q +(C + & q \bar{f} )\partial_t ] = \\
 &\iota_{[X,Y]} \\+ & (\LL_{X} f-\iota_{Y} A)\partial_q  \\+& 
 (\LL_{X} C-\iota_{Y} B +   A\bar{f}  - f\bar{A} )\partial_t \\+ &   q( \LL_{X} \bar{f}   +\iota_{Y} \bar{A})\partial_t \\
 [\LL_{X} +A \partial_q +(B +  q \bar{A})&\partial_t, h   \partial_t]= \LL_X h \partial_t
 \end{align*}

\subsubsection{The derived dg-Leibniz algebra}

The derived algebra is then
$$ {\rm{D}}{\gr{sym}}^k(\PP, Q_\PP) \cong \left\{
\begin{array}{ccl}
\gr X M \oplus (C^\infty M)\partial_q \oplus (\Omega^{1}M \oplus q(C^\infty M))\partial_t  & {\rm{if}} & k=0 \\
(C^\infty M)\partial_t & {\rm{if}} & k =-1
\end{array} \right.$$
where the differential $[Q_\PP, \_]$ becomes the De Rham differential, and the derived bracket is given by the formula
\begin{align} \label{Courant bracket in R[2]-R[1]}
  &\lf \iota_{X} +f_0 \partial_q +(C_0 +  q \bar{f}_0 )\partial_t,   \iota_{Y} +f_1 \partial_q +(C_1 +  q \bar{f}_1 )\partial_t \rf
   =   \\ &  \iota_{[X,Y]} \nonumber \\+&   \left( \LL_X f_1 - \iota_Y df_0 -\iota_Y
\iota_X F \right)  \partial_q \nonumber  \\+ &
(\LL_X C_1 - \iota_Y dC_0 -\iota_Y
\iota_X H - \iota_Y(F \bar{f}_0) - \iota_Y(f_0\bar{F} ) +(df_0) \bar{f}_1 + (\iota_XF) \bar{f}_1 + f_1d\bar{f}_0 +f_1 \iota_X\bar{F} )\partial_t \nonumber \\ + & q( \LL_X \bar{f}_1 -  \iota_Y d\bar{f}_0 -\iota_Y
\iota_X \bar{F} )  \partial_t \nonumber 
\end{align}
and the extra action is given by the bracket
\begin{align*}
\lf \iota_X + f\partial_q +(C +q\bar{f}) \partial_t , h \partial_t \rf & =  \LL_X
h \partial_t .
\end{align*}

\begin{rem} \label{remark courant bracket}
Since $(H + q \bar{F})$ is a closed 3-form on the dg-manifold $\EE =(T[1]M \oplus \real[1], Q_\EE=d+ F \partial_q)$, the previous
derived algebraic structure is equivalent to the derived algebraic structure associated to $(\EE \oplus \real[2],Q_\EE = (H + q \bar{F})\partial_t)$ where the Courant-Dorfman bracket has the usual structure
\begin{align*} 
  \lf (\iota_{X} +f_0 \partial_q) +(C_0 +  q \bar{f}_0 )\partial_t,   (\iota_{Y} +f_1 \partial_q) & +(C_1 +  q \bar{f}_1 )\partial_t \rf
   =  \\
&    \iota_{[X,Y]} +(X(f_1) - Y(f_0) - \iota_Y\iota_X F) \partial_q \\
   + & [Q_\EE, \iota_{X} +f_0 \partial_q] (C_1 +  q \bar{f}_1 )\partial_t \\
   - & (\iota_{Y} +f_1 \partial_q) \left( Q_\EE (C_0 +  q \bar{f}_0 ) \right) \partial_t \\
   - & (\iota_{Y} +f_1 \partial_q) (\iota_{X} +f_0 \partial_q) (H + q \bar{F})\partial_t,
\end{align*}
where we should think of $Q_\EE$ as the De Rham differential and of $[Q_\EE, \iota_{X} +f_0 \partial_q]$ as the Lie derivative.
\end{rem}

\subsection{Isomorphism of symmetries for T-dual pairs} \label{Isomorphism of symmetries for T-dual pairs}
Consider a T-dual pair of $\real[2]$-bundles $(\EE, \PP)$ and $(\bar{\EE}, \bar{\PP})$ with
\begin{align*}
{\PP}&=((T[1]M \oplus \real[1])\oplus \real[2], Q_{{\PP}}=d + {F}\partial_{{q}} + (H + {q} \bar{F})\partial_t)\\
\bar{\PP}&=((T[1]M \oplus \real[1])\oplus \real[2], Q_{\bar{\PP}}=d + \bar{F}\partial_{\bar{q}} + (H + \bar{q} F)\partial_t)
\end{align*}
as in section \ref{subsection T-duality}. Consider the differential graded Lie algebras of symmetries ${\gr{sym}}^{*}(\PP, Q_\PP)$ and
${\gr{sym}}^{*}(\bar{\PP}, Q_{\bar{\PP}})$ of both dg-manifolds and define the isomorphism of graded vector spaces
\begin{align*}
\Phi: {\gr{sym}}^{*}(\PP, Q_\PP) & \to {\gr{sym}}^{*}(\bar{\PP}, Q_{\bar{\PP}})\\
\LL_{X} +A \partial_q +(B +  q \bar{A} )\partial_t & \mapsto \LL_{X} -\bar{A} \partial_{\bar{q}} +(B -  \bar{q} {A} )\partial_t\\
\iota_Y +f \partial_q +(C + q\bar{f}) \partial_t & \mapsto \iota_Y +\bar{f} \partial_{\bar{q}} +(C + \bar{q}{f}) \partial_t\\
h \partial_t & \mapsto h \partial_t.
\end{align*}
A straightforward calculation shows that indeed the map $\Phi$ is an isomorphism of differential graded Lie algebras.

\begin{theorem} \label{theorem T-duality iso for derived symmetries}
 Let $(\EE, \PP)$ and $(\bar{\EE}, \bar{\PP})$ be T-dual dg-manifolds over $\MM$. Then
the map $$\Phi: {\gr{sym}}^{*}(\PP, Q_\PP)  \to {\gr{sym}}^{*}(\bar{\PP}, Q_{\bar{\PP}})$$ is an isomorphism of differential graded
Lie algebras. In particular, the induced map $${\rm D}\Phi: {\rm D}{\gr{sym}}^{*}(\PP, Q_\PP)  \to {\rm D}{\gr{sym}}^{*}(\bar{\PP}, Q_{\bar{\PP}})$$
on the derived dg-Leibniz algebras is also an isomorphism.
\end{theorem}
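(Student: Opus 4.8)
The plan is to verify directly that $\Phi$ commutes with the differential $[Q,\_]$ and with the graded bracket, since once these two facts are established, the statement about derived dg-Leibniz algebras follows formally: the derived bracket $\lf a,b\rf = (-1)^{\|a\|}[[Q,a],b]$ and the differential $\delta = [Q,\_]$ are built entirely out of $[Q,\_]$ and $[,]$, so any isomorphism of differential graded Lie algebras automatically induces an isomorphism of the associated derived dg-Leibniz algebras. Thus the content of the theorem is entirely the ``straightforward calculation'' asserted in the text, and the proof proposal is to organize that calculation cleanly.

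First I would record that $\Phi$ is a bijection of graded vector spaces by inspection: on ${\gr{sym}}^{-2}$ it is the identity $h\partial_t\mapsto h\partial_t$; on ${\gr{sym}}^{-1}$ it sends $\iota_Y + f\partial_q + (C+q\bar f)\partial_t$ to $\iota_Y + \bar f\partial_{\bar q} + (C+\bar q f)\partial_t$, which is visibly invertible (swap the roles of $f$ and $\bar f$); and on ${\gr{sym}}^0$ it sends $\LL_X + A\partial_q + (B+q\bar A)\partial_t$ to $\LL_X - \bar A\partial_{\bar q} + (B - \bar q A)\partial_t$, again invertible up to signs. One must check that $\Phi$ actually lands in ${\gr{sym}}^0(\bar\PP,Q_{\bar\PP})$, i.e.\ that the image of a degree-$0$ derivation commuting with $Q_\PP$ commutes with $Q_{\bar\PP}$; this is precisely the symmetry of the defining equations $dA - \LL_X F = 0$, $d\bar A + \LL_X\bar F = 0$, $dB - \LL_X H + F\wedge\bar A - A\wedge\bar F = 0$ under the substitution $(F,A)\leftrightarrow(\bar F,-\bar A)$ together with the reversal $H\mapsto H$ (using that $F\wedge\bar F = \bar F\wedge F$ as $F,\bar F$ are $2$-forms, so the exactness condition $dH+F\wedge\bar F=0$ is itself $T$-invariant).

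Next I would check compatibility with the differential by comparing the explicit formulas for $[Q_\PP,\_]$ given in the subsection ``The differential structure'' with the corresponding formulas for $[Q_{\bar\PP},\_]$ obtained by the bar-substitution, on each of the three generators $h\partial_t$, $\iota_Y + f\partial_q + (C+q\bar f)\partial_t$, and $\LL_X + A\partial_q + (B+q\bar A)\partial_t$. The degree-$(-2)$ and degree-$0$ cases are immediate ($dh\partial_t\mapsto dh\partial_t$ and $0\mapsto 0$). The degree-$(-1)$ case is the only one requiring care: applying $\Phi$ to $[Q_\PP,\iota_Y+f\partial_q+(C+q\bar f)\partial_t] = \LL_Y + (df+\iota_YF)\partial_q + (dC+\iota_YH+F\bar f+f\bar F)\partial_t - q(d\bar f+\iota_Y\bar F)\partial_t$ and comparing with $[Q_{\bar\PP}, \Phi(\iota_Y+f\partial_q+(C+q\bar f)\partial_t)]$, the coefficients of $\partial_{\bar q}$, $\partial_t$ and $\bar q\partial_t$ must match; this works out because swapping $F\leftrightarrow\bar F$ exchanges the $\partial_q$ and $q\partial_t$ components while fixing the $\partial_t$ component (again using $F\bar f + f\bar F = \bar F f + \bar f F$). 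Similarly I would verify the bracket compatibility $\Phi([\mathscr X_0,\mathscr X_1]) = [\Phi(\mathscr X_0),\Phi(\mathscr X_1)]$ by running through the cases degree-$0$ with degree-$0$, degree-$0$ with degree-$(-1)$, degree-$(-1)$ with degree-$(-1)$, and degree-$0$ with degree-$(-2)$, in each case comparing the listed formulas against their bar-images and tracking the sign in the $q\bar A\mapsto -\bar q A$ rule, which is exactly what is needed to make the cross terms $A_0\bar A_1 - A_1\bar A_0$ and $A\bar f - f\bar A$ transform correctly.

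The main obstacle, such as it is, is purely bookkeeping: keeping track of the sign $-1$ introduced by $\Phi$ on the $\partial_{\bar q}$ and $\bar q$-components in degree $0$ (but not in degree $-1$), and making sure this asymmetry is consistent across the differential and all four brackets. The conceptual point that makes everything go through without surprises is that $T$-duality literally exchanges the $\real[1]$-fibre coordinate $q$ with the ``dual'' coordinate $\bar q$ and simultaneously exchanges the curvatures $F\leftrightarrow\bar F$, while the $\real[2]$-datum $H$ and the exactness relation $dH + F\wedge\bar F = 0$ are symmetric under this exchange; the sign is forced by the requirement that $\Phi$ respect the homological vector fields rather than merely the underlying graded manifolds. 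Once the graded-Lie-algebra isomorphism is in hand, I would conclude in one sentence: since ${\rm D}{\gr{sym}}^*$ is a functor of differential graded Lie algebras (its differential $\delta=[Q,\_]$ and derived bracket $\lf a,b\rf=(-1)^{\|a\|}[[Q,a],b]$ depend only on the dg-Lie structure), $\Phi$ induces an isomorphism ${\rm D}\Phi$ of derived dg-Leibniz algebras, compatible moreover with the action of ${\gr{sym}}^0$, which completes the proof.
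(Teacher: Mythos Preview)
Your proposal is correct and follows exactly the approach indicated by the paper, which simply asserts that ``a straightforward calculation shows that indeed the map $\Phi$ is an isomorphism of differential graded Lie algebras'' and gives no further details. You have supplied a well-organized outline of precisely that calculation, correctly identifying the one subtle bookkeeping point (the sign on the degree-$0$ components versus its absence in degree $-1$) and correctly observing that the derived dg-Leibniz statement follows formally by functoriality.
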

The previous theorem implies the isomorphism of exact algebroids associated to twisted T-dual $S^1$-principal bundles that was
proved in \cite{CavalcantiGualtieri}. Now we proceed to clarify this point.
\subsubsection{T-duality isomorphism for exact Courant algebroids}
Consider the $S^1$-principal bundles $E \stackrel{\pi}{\to} M$ and $\bar{E} \stackrel{\bar{\pi}}{\to} M$ with associated 
curvature closed 2-forms $F$ and $\bar{F}$ respectively, and consider $S^1$-invariant and closed 3-forms 
$\eta \in (\Omega^3_{\rm{cl}} E)^{S^1}$ and $\bar{\eta} \in (\Omega^3_{\rm{cl}} \bar{E})^{S^1}$ such that
$\eta = \pi^*H + \theta \wedge \pi^*\bar{F}$ and $\bar{\eta} = \bar{\pi}^*H + \bar{\theta} \wedge \bar{\pi}^*F$ where  $\theta$ and $\bar{\theta}$ 
are connection 1-forms on $E$ and $\bar{E}$ 
respectively.

The choice of connections provides us with isomorphisms $(TE)/{S^1} \cong TM \oplus \langle \partial_\theta \rangle$, 
$(T\bar{E})/{S^1} \cong TM \oplus \langle \partial_{\bar{\theta}} \rangle$, $(T^*E)/{S^1} \cong T^*M \oplus \langle \theta \rangle$
and $(T^*\bar{E})/{S^1} \cong T^*M \oplus \langle \bar{\theta} \rangle$ where $\partial_\theta$ and $\partial_{\bar{\theta}}$ denote the vector fields generated by the circle action on $E$ and $\bar{E}$  respectively with period 1. 
A section of $(TE \oplus T^*E)/S^1$ can  be written as 
$$X + f \partial_\theta + C + \bar{f}  \theta$$
where $X$ is a vector field on $M$, $f, \bar{f}$ are functions on $M$ and $C$ is a 1-form on $M$ and, therefore, we can write
the isomorphism \begin{align*}
\Psi: \Gamma(TE \oplus T^*E)^{S^1} & \to  {\gr{sym}}^{0}(\PP, Q_\PP)\\
X + f \partial_\theta + C + \bar{f}  \theta & \mapsto \iota_X + f \partial_q + (C + q\bar{f})\partial_t
\end{align*}
which, in view of Remark \ref{remark courant bracket}, it induces an isomorphism of Leibniz algebras
$$\Psi:( \Gamma(TE \oplus T^*E)^{S^1}, [,]_\eta)  \stackrel{\cong}{\to}  ({\gr{sym}}^{0}(\PP, Q_\PP),\lf,\rf )$$
where $ [,]_\eta$ is the $\eta$-twisted Courant-Dorfman bracket.
Obtaining the equivalent isomorphism for the dual
\begin{align*}
\bar{\Psi}: (\Gamma(T\bar{E} \oplus T^*\bar{E})^{S^1}, [,]_{\bar{\eta}}) & \stackrel{\cong}{\to}  ({\gr{sym}}^{0}(\bar{\PP}, Q_{\bar{\PP}}), \lf,\rf)\\
X + f \partial_{\bar{\theta}} + C + \bar{f}  \bar{\theta} & \mapsto \iota_X + f \partial_{\bar{q}} + (C + \bar{q}\bar{f})\partial_t
\end{align*}
we conclude that
\begin{cor} \label{corollary T-duality for Courant algebroids}
For the T-dual pair of principal $S^1$-bundles $(E,\eta)$ and $(\bar{E}, \bar{\eta})$ over $M$ defined above,
the composition
\begin{align*}
 \bar{\Psi}^{-1} \circ \Phi \circ \Psi: (\Gamma(T{E} \oplus T^*{E})^{S^1} &\to \Gamma(T\bar{E} \oplus T^*\bar{E})^{S^1}\\
X + f \partial_{{\theta}} + C + g  {\theta}  & \mapsto X + g \partial_{\bar{\theta}} + C + f  \bar{\theta} 
\end{align*}
induces an isomorphism of Courant algebroids
$$(\Gamma(T{E} \oplus T^*{E})^{S^1}, [,]_{{\eta}}) \cong (\Gamma(T\bar{E} \oplus T^*\bar{E})^{S^1}, [,]_{\bar{\eta}}).$$
\end{cor}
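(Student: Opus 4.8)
The plan is to obtain the claimed isomorphism as a composition of three isomorphisms, all but one of which are already available. By Theorem~\ref{theorem T-duality iso for derived symmetries} the map $\Phi$ is an isomorphism of differential graded Lie algebras ${\gr{sym}}^{*}(\PP,Q_\PP)\to{\gr{sym}}^{*}(\bar{\PP},Q_{\bar{\PP}})$, hence the induced map ${\rm D}\Phi$ on derived dg-Leibniz algebras is an isomorphism, and in particular its degree-$0$ part is an isomorphism of Leibniz algebras $({\rm D}{\gr{sym}}^{0}(\PP,Q_\PP),\lf,\rf)\to({\rm D}{\gr{sym}}^{0}(\bar{\PP},Q_{\bar{\PP}}),\lf,\rf)$. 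Composing with the isomorphisms $\Psi$ and $\bar{\Psi}$ recorded above, which identify $(\Gamma(TE\oplus T^*E)^{S^1},[,]_\eta)$ and $(\Gamma(T\bar{E}\oplus T^*\bar{E})^{S^1},[,]_{\bar\eta})$ with the degree-$0$ derived symmetry algebras of $\PP$ and $\bar{\PP}$, the composite $\bar{\Psi}^{-1}\circ\Phi\circ\Psi$ is then automatically an isomorphism of Leibniz algebras; it remains to write it out explicitly and to check that it also carries the remaining Courant-algebroid data.

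First I would evaluate the composite on a generic invariant section $X+f\partial_\theta+C+g\theta$: applying $\Psi$ produces $\iota_X+f\partial_q+(C+qg)\partial_t$, applying the $\iota$-part of $\Phi$ produces $\iota_X+g\partial_{\bar q}+(C+\bar q f)\partial_t$, and applying $\bar{\Psi}^{-1}$ returns $X+g\partial_{\bar\theta}+C+f\bar\theta$, which is the asserted formula. To upgrade this from an isomorphism of Leibniz algebras to an isomorphism of Courant algebroids I would then check the two remaining pieces of structure: the anchor (the map is the identity on the $\gr X M$-component and merely exchanges the fibre directions $\partial_\theta\leftrightarrow\partial_{\bar\theta}$, $\theta\leftrightarrow\bar\theta$, so it is compatible with the projection to $\gr X M$) and the symmetric pairing (one has $\langle X+f\partial_\theta+C+g\theta,\,X+f\partial_\theta+C+g\theta\rangle=C(X)+fg$ using $\theta(\partial_\theta)=1$, $\iota_{\partial_\theta}\pi^*(\,\cdot\,)=0$ and $\theta|_{TM}=0$, which is visibly symmetric in $f$ and $g$, so the pairing is preserved on the nose).

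The real content — and the step I expect to be the main obstacle — is the one stated just before the corollary, namely that $\Psi$ and $\bar{\Psi}$ are morphisms of Leibniz algebras, i.e.\ that they intertwine the $\eta$- and $\bar\eta$-twisted Courant--Dorfman brackets with the derived bracket \eqref{Courant bracket in R[2]-R[1]}. Establishing this requires substituting $\eta=\pi^*H+\theta\wedge\pi^*\bar{F}$ into the twisted Dorfman bracket on $TE\oplus T^*E$, using the connection-induced splittings $(TE)/S^1\cong TM\oplus\langle\partial_\theta\rangle$ and $(T^*E)/S^1\cong T^*M\oplus\langle\theta\rangle$ together with $d_E\theta=\pi^*F$, and then matching the result term by term with the derived structure of $(\EE\oplus\real[2],\,(H+q\bar{F})\partial_t)$ described in Remark~\ref{remark courant bracket}. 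This is a bookkeeping-heavy but mechanical check: one tracks the contractions $\iota_X\iota_Y$, the Lie derivatives $\LL_X$, the action of $[Q_\EE,\iota_X+f\partial_q]$ as a Lie derivative, and the signs introduced by the degree shift in the passage to ${\rm D}{\gr{sym}}$. Once the bracket identification is in place, the corollary follows at once by composing the three isomorphisms and invoking the pairing and anchor verifications above.
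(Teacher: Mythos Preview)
Your proposal is correct and follows exactly the route the paper takes: the corollary is stated without a separate proof, being an immediate consequence of composing the Leibniz-algebra isomorphisms $\Psi$, $\Phi$ (via Theorem~\ref{theorem T-duality iso for derived symmetries}), and $\bar{\Psi}^{-1}$ set up in the paragraphs preceding it, with the bracket compatibility of $\Psi$ justified by Remark~\ref{remark courant bracket}. Your additional checks of the anchor and pairing, and your explicit outline of the bookkeeping behind the $\Psi$-compatibility claim, go slightly beyond what the paper spells out but are exactly the details one would supply to make the argument self-contained.
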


\subsection{Relation with exceptional generalized geometry} Exceptional generalized geometry is an algebraic framework
suited to the study of solutions of M-theory with fluxes \cite{Hull, PachecoWaldram}. In \cite{Baraglia}, it is shown how the framework
of the exceptional generalized geometry can be obtained from certain properties associated to simple Lie algebras. Rather than reproducing 
what has been done in the above cited references, we will show how the symmetries of $\real[n]$-bundles over 
dg-manifolds provide an alternative way to obtain some of the algebraic structures that appear in \cite{Baraglia}.

\subsubsection{$B_n$ generalized geometry}
Consider a $\real[2]$-bundle over a $\real[1]$-bundle over $\MM$ which is T-dual to itself, that is
$$\PP=((T[1]M \oplus \real[1])\oplus \real[2], d + F \partial_q + (H + q F )\partial_t),$$
satisfying the equations $dF=0$ and $dH + F \wedge F=0.$

Consider the sub-differential graded Lie algebra $\BB^*$ of ${\gr{sym}}^{*}(\PP, Q_\PP) $ which consist of elements of the form
\begin{align*}
\LL_X + A\partial_q + (B -qA) \partial_t & \in \BB^0\\
\iota_Y + f \partial_q + (C + qf) \partial_t & \in \BB^{-1}\\
h \partial_t & \in \BB^{-2}
\end{align*}
and note that the restrictions on $A$ and $B$ become the equations
$$dA -\LL_XF =0 \ \ \mbox{and} \ \ dB - \LL_XH = 0.$$
The Lie bracket on $\BB^0$ becomes 
\begin{align*}
[\LL_{X_0} + A_0\partial_q + (B_0 -qA_0) \partial_t, \LL_{X_1} + A_1\partial_q & + (B_1-qA_1) \partial_t]= \\
& \LL_{[X_0,X_1]} + (\LL_{X_0}A_1 - \LL_{X_1}A_0) \partial_q \\
+& (\LL_{X_0}B_1 - \LL_{X_1}B_0 +2A_0A_1)\partial_t\\
-&q( \LL_{X_0}A_1 - \LL_{X_1}A_0) \partial_t
\end{align*}
and, therefore, $\BB^0$ fits in the middle of the short exact sequence
 $$ 0 \to \Omega^1_{\rm{cl}}M \oplus  \Omega^2_{\rm{cl}}M \to \BB^0 \to \gr XM \to 0.$$

If we denote 
$$(X,f,C):=\iota_{X} +f \partial_q +(C +  q f )\partial_t$$
then the derived bracket on ${\rm{D}}\BB^0$ becomes
\begin{align*}
\lf (X,f,C),(Y,g,D) \rf  = 
 ([X, & Y], (\LL_X g  - \LL_Yf -\iota_Y \iota_XF ),\\
( & \LL_X D -  \iota_Y dC -\iota_Y 
\iota_X H - 2f\iota_YF    + 2g\iota_XF + 2gdf ))
\end{align*}
and the pairing comes from the bracket of degree -1 derivations, i.e.
\begin{align*}
\langle (X,f,C),(Y,g,D) \rangle = \iota_X D + \iota_Y C +2 fg.
\end{align*}
The Lie algebra $\BB^0$ act on ${\rm{D}}\BB^0$ by the bracket $[,]$ and we see that closed 1-forms $A$
and closed 2-forms $B$ act as follows
\begin{align*}
A (X,f,C) &= (0, -\iota_XA, 2Af)\\
B (X,f,C) &= (0, 0, -\iota_XB ).
\end{align*}

Note furthermore that since the $\real[2]$-bundle $\PP$ is T-dual to itself, the automorphism of differential graded Lie algebras
$\Phi: {\gr{sym}}^{*}(\PP, Q_\PP)  \to {\gr{sym}}^{*}({\PP}, Q_{{\PP}})$ defined in section \ref{Isomorphism of symmetries for T-dual pairs}
induces an automorphism ${\rm D}\Phi:{\rm D} {\gr{sym}}^{*}(\PP, Q_\PP)  \to {\rm D}{\gr{sym}}^{*}({\PP}, Q_{{\PP}})$ on 
the derived dg-Leibniz algebras. It is easy to see that the dg-Leibniz algebra  ${\rm{D}}\BB^*$ is precisely the fixed points
of the automorphism ${\rm D}\Phi$. 
Finally, noting  that the previous algebraic structure on $\gr XM \oplus C^\infty M \oplus \Omega^1M$ is 
the structure underlying the $B_n$ generalized geometry that is defined in section 2.4 of \cite{Baraglia}, we conclude

\begin{theorem} \label{theorem Bn geometry}
The $B_n$ generelized geometry structure on $\gr XM \oplus C^\infty M \oplus \Omega^1M$ defined in section 2.4 of \cite{Baraglia}
twisted by the closed 2-form $F$ and the 3-form $H$ satisfying $dH + F \wedge F=0$, 
is isomorphic to the fixed sub dg-Leibniz algebra of fixed points of the T-duality automorphism 
of Courant-Dorfman algebroids
$${\rm D}\Phi:{\rm D} {\gr{sym}}^{*}(\PP, Q_\PP)  \to {\rm D}{\gr{sym}}^{*}({\PP}, Q_{{\PP}})$$
on the derived dg-Leibniz algebra of the symmetries of the self T-dual dg-manifold
$$\PP=((T[1]M \oplus \real[1])\oplus \real[2], d + F \partial_q + (H + q F )\partial_t).$$
\end{theorem}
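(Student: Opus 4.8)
The plan is to establish the isomorphism by a direct identification of the two algebraic structures, using the explicit formulas already derived in the excerpt. First I would make precise the claim that $\mathrm{D}\BB^*$ is the fixed-point sub-dg-Leibniz algebra of $\mathrm{D}\Phi$. Since $\PP$ is self T-dual, $\Phi$ is an \emph{automorphism} of $\gr{sym}^*(\PP,Q_\PP)$, and by inspection of the defining formulas it sends $\LL_X + A\partial_q + (B+q\bar A)\partial_t \mapsto \LL_X - \bar A\partial_q + (B - qA)\partial_t$ and $\iota_Y + f\partial_q + (C+q\bar f)\partial_t \mapsto \iota_Y + \bar f\partial_q + (C + qf)\partial_t$ and fixes $h\partial_t$. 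Setting the image equal to the argument forces $\bar A = -A$ (degree $0$) and $\bar f = f$ (degree $-1$), which are exactly the constraints defining $\BB^*$; so $\BB^* = \gr{sym}^*(\PP,Q_\PP)^{\Phi}$, and since $\mathrm{D}\Phi$ is just $\Phi$ acting on $\gr{sym}^{*<0}[1]$, the fixed points $\mathrm{D}\gr{sym}^*(\PP)^{\mathrm{D}\Phi}$ coincide with $\mathrm{D}\BB^*$ as graded vector spaces. One should check that $\BB^*$ is closed under the bracket and the differential $[Q_\PP,\_]$ — this is automatic because $\BB^*$ is the fixed locus of a dg-Lie automorphism, but it is worth pointing out that the bracket and differential formulas in the excerpt (specialized to $\bar F = F$) indeed preserve the $\bar A = -A$, $\bar f = f$ conditions, which can be read off the displayed formulas.

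Second, I would transport this to the derived Leibniz level. By the general construction of the derived dg-Leibniz algebra recalled in the excerpt, $\mathrm{D}\BB^* = \BB^{*<0}[1]$ with differential $\delta = [Q_\PP,\_]$ and derived bracket $\lf a,b\rf = (-1)^{\|a\|}[[Q_\PP,a],b]$; since all three operations are restrictions of the corresponding operations on $\gr{sym}^*(\PP)$, and $\BB^*$ is a sub-dg-Lie algebra, $\mathrm{D}\BB^*$ is a sub-dg-Leibniz algebra of $\mathrm{D}\gr{sym}^*(\PP)$, and it is $\mathrm{D}\Phi$-fixed. Specializing the formula \eqref{Courant bracket in R[2]-R[1]} to $\bar F = F$, $f_i = \bar f_i$ and using the notation $(X,f,C) := \iota_X + f\partial_q + (C+qf)\partial_t$, a routine simplification collapses the $\partial_t$ and $q\partial_t$ parts into a single expression, and combining $\iota_Y\iota_X H$ terms gives exactly the bracket displayed for $\mathrm{D}\BB^0$; likewise the bracket of two degree $-1$ derivations gives the pairing $\iota_X D + \iota_Y C + 2fg$. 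I would record these simplified formulas as the content of the computation and note that they match, term by term, the twisted $B_n$ bracket and pairing of \cite[\S 2.4]{Baraglia} under the evident identification $\gr XM \oplus C^\infty M \oplus \Omega^1 M \cong \mathrm{D}\BB^0$, $(X,f,C) \leftrightarrow X + f + C$.

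Third, I would treat the extra structures: the action of $\BB^0$ on $\mathrm{D}\BB^0$ by $[\,,\,]$ (restriction of the canonical action of $\gr{sym}^0$ on $\mathrm{D}\gr{sym}^0$ noted in the excerpt), in particular the action of closed $1$-forms $A$ and closed $2$-forms $B$ computed above, matching the corresponding ``$B$-field'' and ``$A$-field'' transformations in \cite{Baraglia}; and the differential $\Omega^0 M \xrightarrow{d} \mathrm{D}\BB^0$ together with the $\BB^{-2} = C^\infty M$ piece with bracket $\lf(X,f,C),h\partial_t\rf = \LL_X h\,\partial_t$. Assembling these, $\mathrm{D}\Phi$ restricted to $\mathrm{D}\BB^*$ is the identity, so $\mathrm{D}\BB^*$ is literally the fixed sub-dg-Leibniz algebra, and the dictionary above is the desired isomorphism with twisted $B_n$ generalized geometry.

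The main obstacle is not conceptual but bookkeeping: matching sign and coefficient conventions between the derived-bracket formula \eqref{Courant bracket in R[2]-R[1]} (with its $(-1)^{\|a\|}$ twist and the contributions from $\iota_Y d f_0$, $(\iota_X F)\bar f_1$, $f_1 d\bar f_0$, etc.) and the normalization of the $B_n$ bracket in \cite{Baraglia}, where the factor of $2$ in the $A_0 A_1$ and $fg$ terms (coming from $F = \bar F$ identifying the two ``mixed'' contributions) must be tracked carefully; one may need to absorb a rescaling in the identification of the $\real$-summand to make the two structures agree on the nose rather than up to isomorphism. I would isolate this as a lemma: the specialization $\bar F = F$, $\bar f = f$ of the formulas in Section \ref{section R2-R1-bundles} yields precisely the twisted $B_n$ structure of \cite[\S 2.4]{Baraglia}, and then the theorem follows by combining it with Theorem \ref{theorem T-duality iso for derived symmetries} and the fixed-point identification.
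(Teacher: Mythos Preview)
Your proposal is correct and follows essentially the same approach as the paper: the paper likewise identifies $\BB^*$ as the $\Phi$-fixed subalgebra, specializes the general derived-bracket formula \eqref{Courant bracket in R[2]-R[1]} to $\bar F=F$, $\bar f=f$, $\bar A=-A$, and then matches the resulting bracket, pairing, and $\BB^0$-action term-by-term with Baraglia's $B_n$ structure. The only difference is presentational order---the paper carries out the explicit computations first and states the theorem as a summary, whereas you organize the logical steps (fixed-point identification, closure under bracket and differential, specialization, dictionary with \cite{Baraglia}) more explicitly; your caution about the factor of $2$ is exactly the bookkeeping the paper's displayed formulas for $\mathrm{D}\BB^0$ already record.
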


In particular, we note that the algebraic structure underlying the
 $B_n$ generalized geometry can be obtained as a sub-algebra of the algebraic structure underlying
 generalized geometry.

\subsubsection{$E_6$ generalized geometry} \label{section E6 generalized}
Consider a $\real[6]$-bundle over a $\real[3]$-bundle over $\MM$ which is given by the data
$$\PP=((T[1]M \oplus \real[3])\oplus \real[6], d + F_4 \partial_q + (F_7 + q \frac{F_4}{2} )\partial_t)$$
for $q$ a variable of degree 3, $t$ a variable of degree 6,
$F_4$ a closed 4-form and $F_7$ a 7-form satisfying the equation $dF_7 + \frac{1}{2}F_4 \wedge F_4=0.$

Consider the sub-differential graded Lie algebra $\CC^*$ of ${\gr{sym}}^{*}(\PP, Q_\PP) $ which consist of elements of the form
\begin{align*}
\LL_X + A_3\partial_q + (B_6 -q\frac{A_3}{2}) \partial_t & \in \CC^0\\
\iota_Y + \sigma_2 \partial_q + (\sigma_5 + q\frac{\sigma_2}{2}) \partial_t & \in \CC^{-1}\\
\eta_1 \partial_q + (C_4 -q\frac{\eta_1}{2}) \partial_t & \in \CC^{-2}\\
f \partial_q + (D_3 + q\frac{f}{2}) \partial_t & \in \CC^{-3}\\
h \partial_t & \in \CC^{< -3}
\end{align*}
and note that the restrictions on $A_3$ and $B_6$ become the equations
$$dA_3 -\LL_XF_4 =0 \ \ \mbox{and} \ \ dB_6 - \LL_XF_7 = 0.$$

If we denote 
$$(X,\sigma_2, \sigma_5):=\iota_{X} +\sigma_2\partial_q +(\sigma_5 +  q \frac{\sigma_2}{2} )\partial_t$$
then the derived bracket on ${\rm{D}}\CC^0$ becomes
\begin{align*}
\lf (X,\sigma_2, \sigma_5),(Y,\tau_2, \tau_5) \rf  = 
 ([X, & Y], (\LL_X \tau_2  - \iota_Y \sigma_2 -\iota_Y \iota_XF_4 ),\\
(  \LL_X \tau_5 -  \iota_Y d\sigma_5  -\iota_Y 
\iota_X F_7  &- \iota_Y(F_4 \wedge \sigma_2)    + (\iota_XF_4 )\wedge\tau_2  + d\sigma_2 \wedge \tau_2 ))
\end{align*}
and the pairing becomes 
\begin{align*}
\langle (X,\sigma_2, \sigma_5),(Y,\tau_2, \tau_5) \rangle =(0, \iota_X \tau_2 + \iota_Y \sigma_2,
 \iota_X \tau_5 + \iota_Y \sigma_5 +\sigma_2 \wedge \tau_2 ).
\end{align*}
The Lie algebra $\CC^0$ act on ${\rm{D}}\CC^0$ by the bracket $[,]$ and we see that closed 3-forms $A_3$
and closed 6-forms $B_6$ act as follows
\begin{align*}
A_3 (X,\sigma_2, \sigma_5) &= (0, -\iota_XA_3, A_3 \wedge \sigma_2)\\
B_6 (X,\sigma_2, \sigma_5) &= (0, 0, -\iota_XB_6 ).
\end{align*}

The dg-Leibniz algebra ${\rm{D}}\CC^*$ just defined is one piece of the algebraic structure
defined on the exceptional generalized geometry framework \cite{Hull, PachecoWaldram}. The 
previous algebraic structure on $\gr X M \oplus \Omega^2 M \oplus \Omega^5 M$ is the underlying
algebraic structure of the $E_6$ generalized geometry outlined in \cite[Chapter 11]{Baraglia}
\subsubsection{} We conclude this section by noting that the category of dg-manifolds provides a natural framework
for generalized geometry and for its exceptional relative. In particular, we note that the derived algebraic 
structure obtained in the previous 
two sections satisfies the axioms of a dg-Leibniz algebra for they are constructed as derived
differential graded Lie algebras. The derived brackets in section \ref{section E6 generalized} may appear complicated
at first sight, nevertheless they are the brackets that satisfy the desired properties (cf. \cite[Chapter 11]{Baraglia}).


\def\cprime{$'$}
\begin{thebibliography}{10}

\bibitem{Baraglia}
D.~Baraglia.
\newblock Leibniz algebroids, twistings and exceptional generalized geometry.
\newblock {\em J. Geom. Phys.}, 62(5):903--934, 2012.

\bibitem{BouwknegtMathai2}
Peter Bouwknegt, Jarah Evslin, and Varghese Mathai.
\newblock {$T$}-duality: topology change from {$H$}-flux.
\newblock {\em Comm. Math. Phys.}, 249(2):383--415, 2004.

\bibitem{BouwknegtMathai1}
Peter Bouwknegt, Jarah Evslin, and Varghese Mathai.
\newblock Topology and {$H$}-flux of {$T$}-dual manifolds.
\newblock {\em Phys. Rev. Lett.}, 92(18):181601, 3, 2004.

\bibitem{Bursztyn}
Henrique Bursztyn, Gil~R. Cavalcanti, and Marco Gualtieri.
\newblock Reduction of {C}ourant algebroids and generalized complex structures.
\newblock {\em Adv. Math.}, 211(2):726--765, 2007.

\bibitem{CavalcantiGualtieri}
Gil~R. Cavalcanti and Marco Gualtieri.
\newblock Generalized complex geometry and {$T$}-duality.
\newblock In {\em A celebration of the mathematical legacy of {R}aoul {B}ott},
  volume~50 of {\em CRM Proc. Lecture Notes}, pages 341--365. Amer. Math. Soc.,
  Providence, RI, 2010.

\bibitem{CFK01}
Ionu{\c{t}} Ciocan-Fontanine and Mikhail Kapranov.
\newblock Derived {Q}uot schemes.
\newblock {\em Ann. Sci. \'Ecole Norm. Sup. (4)}, 34(3):403--440, 2001.

\bibitem{CFK09}
Ionu{\c{t}} Ciocan-Fontanine and Mikhail Kapranov.
\newblock Virtual fundamental classes via dg-manifolds.
\newblock {\em Geom. Topol.}, 13(3):1779--1804, 2009.

\bibitem{CFK02}
Ionu{\c{t}} Ciocan-Fontanine and Mikhail~M. Kapranov.
\newblock Derived {H}ilbert schemes.
\newblock {\em J. Amer. Math. Soc.}, 15(4):787--815, 2002.

\bibitem{Courant}
Theodore~James Courant.
\newblock Dirac manifolds.
\newblock {\em Trans. Amer. Math. Soc.}, 319(2):631--661, 1990.

\bibitem{Dorfman}
Irene Dorfman.
\newblock {\em Dirac structures and integrability of nonlinear evolution
  equations}.
\newblock Nonlinear Science: Theory and Applications. John Wiley \& Sons Ltd.,
  Chichester, 1993.

\bibitem{GetzlerDerived}
Ezra Getzler.
\newblock Higher derived brackets.
\newblock 1010.5413, 2010.

\bibitem{Gualtieri}
Marco Gualtieri.
\newblock Generalized complex geometry.
\newblock Oxford Ph.D. thesis, 2003. arXiv:DG/0412211, arXiv:math/0703298,
  2003.

\bibitem{Hitchin}
Nigel Hitchin.
\newblock Generalized {C}alabi-{Y}au manifolds.
\newblock {\em Q. J. Math.}, 54(3):281--308, 2003.

\bibitem{Hull}
Christopher~M. Hull.
\newblock Generalised geometry for {M}-theory.
\newblock {\em J. High Energy Phys.}, (7):079, 31, 2007.

\bibitem{KontsevichTorus95}
Maxim Kontsevich.
\newblock Enumeration of rational curves via torus actions.
\newblock In {\em The moduli space of curves ({T}exel {I}sland, 1994)}, volume
  129 of {\em Progr. Math.}, pages 335--368. Birkh\"auser Boston, Boston, MA,
  1995.

\bibitem{Kosmann}
Y.~Kosmann-Schwarzbach.
\newblock Exact {G}erstenhaber algebras and {L}ie bialgebroids.
\newblock {\em Acta Appl. Math.}, 41(1-3):153--165, 1995.
\newblock Geometric and algebraic structures in differential equations.

\bibitem{KosmannDerived}
Yvette Kosmann-Schwarzbach.
\newblock Derived brackets.
\newblock {\em Lett. Math. Phys.}, 69:61--87, 2004.

\bibitem{Lurie10}
Jacob Lurie.
\newblock Moduli problems for ring spectra.
\newblock In {\em Proceedings of the {I}nternational {C}ongress of
  {M}athematicians. {V}olume {II}}, pages 1099--1125, New Delhi, 2010.
  Hindustan Book Agency.

\bibitem{PachecoWaldram}
Paulo~Pires Pacheco and Daniel Waldram.
\newblock M-theory, exceptional generalised geometry and superpotentials.
\newblock {\em J. High Energy Phys.}, (9):123, 37, 2008.

\bibitem{PT13}
Tony Pantev, Bertrand To{\"e}n, Michel Vaqui{\'e}, and Gabriele Vezzosi.
\newblock Shifted symplectic structures.
\newblock {\em Publ. Math. Inst. Hautes \'Etudes Sci.}, 117:271--328, 2013.

\bibitem{Roytenberg}
Dmitry Roytenberg.
\newblock On the structure of graded symplectic supermanifolds and {C}ourant
  algebroids.
\newblock In {\em Quantization, {P}oisson brackets and beyond ({M}anchester,
  2001)}, volume 315 of {\em Contemp. Math.}, pages 169--185. Amer. Math. Soc.,
  Providence, RI, 2002.

\bibitem{schwarz1996superanalogs}
Albert Schwarz.
\newblock Superanalogs of symplectic and contact geometry and their
  applications to quantum field theory.
\newblock {\em Translations of the American Mathematical Society-Series 2},
  177:203--218, 1996.

\bibitem{Severa-homotopy}
Pavol {\v{S}}evera.
\newblock Some title containing the words ``homotopy'' and ``symplectic'', e.g.
  this one.
\newblock In {\em Travaux math\'ematiques. {F}asc. {XVI}}, Trav. Math., XVI,
  pages 121--137. Univ. Luxemb., Luxembourg, 2005.

\bibitem{simpson1997closed}
Carlos Simpson.
\newblock A closed model structure for $ n $-categories, internal $ hom $, $ n
  $-stacks and generalized seifert-van kampen.
\newblock {\em arXiv preprint alg-geom/9704006}, 1997.

\bibitem{Toen09}
Bertrand To{\"e}n.
\newblock Higher and derived stacks: a global overview.
\newblock In {\em Algebraic geometry---{S}eattle 2005. {P}art 1}, volume~80 of
  {\em Proc. Sympos. Pure Math.}, pages 435--487. Amer. Math. Soc., Providence,
  RI, 2009.

\bibitem{toen2014derived}
Bertrand To{\"e}n.
\newblock Derived algebraic geometry.
\newblock {\em arXiv preprint arXiv:1401.1044}, 2014.

\bibitem{Uchino}
Kyousuke Uchino.
\newblock Remarks on the definition of a {C}ourant algebroid.
\newblock {\em Lett. Math. Phys.}, 60(2):171--175, 2002.

\bibitem{Uribe}
Bernardo Uribe.
\newblock Group actions on dg-manifolds and their relation to equivariant
  cohomology.
\newblock arXiv1010.5859, 2010.

\bibitem{Voronov-graded}
Theodore Voronov.
\newblock Graded manifolds and {D}rinfeld doubles for {L}ie bialgebroids.
\newblock In {\em Quantization, {P}oisson brackets and beyond ({M}anchester,
  2001)}, volume 315 of {\em Contemp. Math.}, pages 131--168. Amer. Math. Soc.,
  Providence, RI, 2002.

\bibitem{VoronovDerived1}
Theodore Voronov.
\newblock Higher derived brackets and homotopy algebras.
\newblock {\em J. Pure Appl. Algebra}, 202(1-3):133--153, 2005.

\bibitem{VoronovDerived2}
Theodore~Th. Voronov.
\newblock Higher derived brackets for arbitrary derivations.
\newblock In {\em Travaux math\'ematiques. {F}asc. {XVI}}, Trav. Math., XVI,
  pages 163--186. Univ. Luxemb., Luxembourg, 2005.

\end{thebibliography}
\end{document}